\theoremstyle{definition}
\newtheorem{thm}{Theorem}[section]
\newtheorem{dfn}[thm]{Definition}
\newtheorem{note-dfn}[thm]{\rm{Notation-Definition}}
\newtheorem{note-rem}[thm]{\rm{Notation-Remark}}
\newtheorem{exam}[thm]{\rm{Example}}
\newtheorem{prop}[thm]{Proposition}
\newtheorem{cor}[thm]{Corollary}
\newtheorem{lem}[thm]{Lemma}
\newtheorem{rem}[thm]{\rm{Remark}}
\newtheorem{conj}[thm]{\rm{Conjecture}}
\newtheorem{ques}[thm]{\rm{Question}}
\newtheorem{prop-dfn}[thm]{Proposition-Definition}
\newcommand{\Z}{\mathbb{Z}}
\newcommand{\Q}{\mathbb{Q}}
\newcommand{\ZZ}{\widehat{\mathbb{Z}}}
\newcommand{\cS}{\mathcal{S}}
\newcommand{\Spec}{\mathrm{Spec}\,}
\title{Topics in the Grothendieck conjecture for hyperbolic polycurves of dimension $2$}
\author{Ippei Nagamachi}
\date{}
\begin{document}
\maketitle

\begin{abstract}
In this paper, we study the anabelian geometry of hyperbolic polycurves of dimension $2$ over sub-$p$-adic fields.
In $1$-dimensional case, Mochizuki proved the Hom version of the Grothendieck conjecture for hyperbolic curves over sub-$p$-adic fields and the pro-$p$ version of this conjecture. 
In $2$-dimensional case, a naive analogue of this conjecture does not hold for hyperbolic polycurves over general sub-$p$-adic fields.
Moreover, the Isom version of the pro-$p$ Grothendieck conjecture does not hold in general.
We explain these two phenomena and prove the Hom version of the Grothendieck conjecture for hyperbolic polycurves of dimension $2$ under the assumption that the Grothendieck section conjecture holds for some hyperbolic curves.\footnote[0]{\textit{2010 Mathematics Subject Classification.} Primary 14H30; Secondary 14H10, 14H25.}
\end{abstract}

\tableofcontents

\setcounter{section}{-1}
\section{Introduction}
Let $K$ be a field, $\overline{K}$ a separable closure of $K$, and $Y, X$ normal varieties (cf.\,Definition \ref{geometrically}) over $K$.
Write $Y_{\overline{K}}$ (resp.\,$X_{\overline{K}}$) for the scheme $Y\times_{\Spec K}\Spec \overline{K}$ (resp.\,$X\times_{\Spec K}\Spec \overline{K}$) and $G_{K}$ for the absolute Galois group $\mathrm{Gal}\,(\overline{K}/K)$.
Take a geometric point $\ast_{Y}$ (resp.\,$\ast_{X}$) of $Y_{\overline{K}}$ (resp.\,$X_{\overline{K}}$).
A morphism $f: Y \rightarrow X$ over $K$ induces a homomorphism
$$f_{\ast}: \pi_{1}(Y,\ast_{Y})\rightarrow \pi_{1}(X, \ast_{X})$$
over $G_{K}$ between the \'etale fundamental groups of $Y$ and $X$ which is uniquely determined up to inner automorphisms induced by elements of $\pi_{1}(X_{\overline{K}}, \ast_{X})$.
Hence, we obtain a natural map
$$\mathrm{Mor}_{K}(Y, X) \rightarrow \mathrm{Hom}_{G_{K}}(\pi_{1}(Y,\ast_{Y}), \pi_{1}(X, \ast_{X}))/\mathrm{Inn}\,\pi_{1}(X_{\overline{K}}, \ast_{X}),$$
where we write $\mathrm{Mor}_{K}(Y, X)$ (resp.\,$\mathrm{Hom}_{G_{K}}(\pi_{1}(Y,\ast_{Y}), \pi_{1}(X, \ast_{X}))$; $\mathrm{Inn}\,\pi_{1}(X_{\overline{K}}, \ast_{X})$) for the set of morphisms from $Y$ to $X$ over $K$ (resp.\,the set of continuous homomorphisms over $G_{K}$ from $\pi_{1}(Y,\ast_{Y})$ to $\pi_{1}(X, \ast_{X})$; the group of inner automorphisms of $\pi_{1}(X_{\overline{K}}, \ast_{X})$).

In anabelian geometry, the following questions have been studied:
\begin{ques}
\begin{enumerate}
\item Write $\mathrm{Isom}_{K}(Y, X)$ (resp.\,$\mathrm{Isom}_{G_{K}}(\pi_{1}(Y,\ast_{Y}), \pi_{1}(X, \ast_{X}))$) for the subset of $\mathrm{Mor}_{K}(Y, X)$ (resp.\,$\mathrm{Hom}_{G_{K}}(\pi_{1}(Y,\ast_{Y}), \pi_{1}(X, \ast_{X}))$) consisting of isomorphisms.
Is the map
$$\mathrm{Isom}_{K}(Y, X) \rightarrow \mathrm{Isom}_{G_{K}}(\pi_{1}(Y,\ast_{Y}), \pi_{1}(X, \ast_{X}))/\mathrm{Inn}\,\pi_{1}(X_{\overline{K}}, \ast_{X})$$
bijective?
\item Write $\mathrm{Mor}^{\mathrm{dom}}_{K}(Y, X)$ for the subset of $\mathrm{Mor}_{K}(Y, X)$ consisting of dominant morphisms and $\mathrm{Hom}^{\mathrm{open}}_{G_{K}}(\pi_{1}(Y,\ast_{Y}), \pi_{1}(X, \ast_{X}))$ for the subset of $\mathrm{Hom}_{G_{K}}(\pi_{1}(Y,\ast_{Y}), \pi_{1}(X, \ast_{X}))$ consisting of open homomorphisms.
Is the map (cf.\,\cite{Ho} Lemma 1.3)
$$\mathrm{Mor}^{\mathrm{dom}}_{K}(Y, X) \rightarrow \mathrm{Hom}^{\mathrm{open}}_{G_{K}}(\pi_{1}(Y,\ast_{Y}), \pi_{1}(X, \ast_{X}))/\mathrm{Inn}\,\pi_{1}(X_{\overline{K}}, \ast_{X})$$
bijective?
\item Suppose that $Y=\Spec K$.
(Hence, we have $\mathrm{Mor}_{K}(Y, X)=X(K)$).
Write $\mathrm{Sect}_{G_{K}}(\pi_{1}(X, \ast_{X}))$ for the set of sections of the natural surjective homomorphism $\pi_{1}(X, \ast_{X})\rightarrow G_{K}$.
Is the map
$$X(K) \to \mathrm{Sect}_{G_{K}}(\pi_{1}(X, \ast_{X}))/\mathrm{Inn}\,\pi_{1}(X_{\overline{K}}, \ast_{X})$$
bijective?
\end{enumerate}
\label{question}
\end{ques}

In the case where $K$ is finitely generated over $\Q$ and $X$ is a hyperbolic curve (cf.\,Definition \ref{hyperbolic}.1), Grothendieck conjectured that the maps discussed in Questions \ref{question}.1, \ref{question}.2, and a modified version of the map discussed in Question 0.1.3 (see Conjecture \ref{section conjecture} for this modified version) are bijective \cite{Letter}.
Question \ref{question}.1 (resp.\,\ref{question}.2; \ref{question}.3) is called the Isom version of the Grothendieck conjecture (resp.\,the Hom version of the Grothendieck conjecture; the Grothendieck section conjecture).

Suppose that $X$ is a hyperbolic curve.
In the case where $K$ is finitely generated over $\Q$, $Y$ is also a hyperbolic curve, and at least one of $X$ and $Y$ is affine, Question \ref{question}.1 was affirmatively answered by Tamagawa \cite{Tama1}.
In the case where $K$ is a sub-$p$-adic field (i.e., a subfield of a field finitely generated over $\Q_{p}$ (cf.\,Definition \ref{subp})) and $Y$ is a smooth variety, Question \ref{question}.2 was affirmatively answered by  Mochizuki (cf.\,\cite{Moch} Theorem A).
Also, the injectivity portion of Question \ref{question}.3 was proved in \cite{Moch} (cf.\,Lemma \ref{well-def}).

Suppose that $X$ is a hyperbolic polycurve (cf.\,Definition \ref{hyperbolic}), that is, a variety $X$ over $K$ which admits a structure of successive smooth fibrations
\begin{equation}\label{0}
X = X_n \overset{f_n}{\rightarrow} X_{n-1} \overset{f_{n-1}}{\rightarrow} \cdots 
\overset{f_2}{\rightarrow} X_1 
\overset{f_1}{\rightarrow} \mathrm{Spec}\,K 
\end{equation}
whose fibers are hyperbolic curves.
A hyperbolic polycurve is regarded as a higher dimensional analogue of a hyperbolic curve, and has been studied in anabelian geometry.
In the case where $K$ is sub-$p$-adic and $n \leq 4$, Question \ref{question}.2 was affirmatively answered by Hoshi under some conditions (cf.\,\cite{Ho} Theorem A).
Then he solved Question \ref{question}.1 as a corollary.
Moreover, in the case where $X$ is a strongly hyperbolic Artin neighborhood (\cite{SS} Definition 6.1) and $K$ is finitely generated over $\Q$, Question \ref{question}.1 was affirmatively answered by Stix and Schmidt \cite{SS}.

Suppose that $X$ is a hyperbolic polycurve of dimension $2$.
\cite{Ho}\,Theorem 3.14, which is a sort of the Hom version of the Grothendieck conjecture, states that every element of the set
$$\mathrm{Hom}^{\mathrm{open}}_{G_{K}}(\pi_{1}(Y,\ast_{Y}), \pi_{1}(X, \ast_{X}))/\mathrm{Inn}\,\pi_{1}(X_{\overline{K}}, \ast_{X})$$
with topologically finitely generated kernel arises from an element of the set $\mathrm{Mor}^{\mathrm{dom}}_{K}(Y, X)$.
(See \cite{Ho4} Theorem B for a generalization of this theorem.)
On the other hand, since there exists a $K$-morphism $f: Y\to X$ which is not dominant and induces an open outer homomorphism between the \'etale fundamental groups, we cannot expect that Question \ref{question}.2 is affirmative (cf.\,\cite{SGA2} XII Corollaire 3.5).
However, we can expect that any open outer group homomorphism from $\pi_{1}(Y, \ast_{Y})$ to $\pi_{1}(X, \ast_{X})$ over $G_{K}$ arises from a nonconstant $K$-morphism from $Y$ to $X$.

One of the main results of this paper is as follows:
\begin{thm}[cf.\,Theorem \ref{assumesection}]
Suppose that $K$ is a sub-$p$-adic field and $Y$ is a normal variety over $K$.
Let $X_{2}\to X_{1}\to \Spec K$ be a hyperbolic polycurve of dimension $2$ over $K$ (cf.\,Definition \ref{hyperbolic}.2) and suppose that $X=X_{2}$.
Moreover, suppose that the Grothendieck section conjecture (cf.\,Question \ref{question}.3 and Conjecture \ref{section conjecture}) holds for every hyperbolic curve over a field which is finitely generated extension of $K$ with transcendental degree $1$ (cf.\,Remark \ref{trdeg1}).
Then each element of
$$\mathrm{Hom}_{G_{K}}^{\mathrm{open}}(\pi_{1}(Y, \ast_{Y}), \pi_{1}(X_{2}, \ast_{X}))/ \mathrm{Inn}\,\pi_{1}(X_{2,\overline{K}}, \ast_{X})$$
arises from an element of $\mathrm{Mor}^{\mathrm{nonconst}}_{K}(Y, X_{2})$.
Here, $\mathrm{Mor}^{\mathrm{nonconst}}_{K}(Y, X_{2})$ denotes the subset of $\mathrm{Mor}_{K}(Y, X_{2})$ consisting of nonconstant morphisms.
\label{introsectionhom}
\end{thm}

In \cite{Moch}, the Isom and Hom versions of the pro-$p$ Grothendieck conjecture for hyperbolic curves over sub-$p$-adic fields were studied.
Sawada studied the Isom and Hom versions of the pro-$p$ Grothendieck conjecture for hyperbolic polycurves over sub-$p$-adic fields under some conditions on their fundamental groups \cite{Saw}.
In Section \ref{example}, we give examples of hyperbolic polycurves over sub-$p$-adic fields which show that the Isom and Hom versions of  the pro-$p$ Grothendieck conjecture for hyperbolic polycurves over sub-$p$-adic fields do not hold generally.

The content of each section is as follows:

In Section \ref{convention}, we give a review of properties of the \'etale fundamental groups of hyperbolic polycurves.
In Section \ref{conjecturestate}, we review the Grothendieck section conjecture for hyperbolic curves over sub-$p$-adic fields.
In Section \ref{mainsection}, we give a proof of Theorem \ref{introsectionhom}.
In Section \ref{example}, we give examples of hyperbolic polycurves which show that the anabelianity of hyperbolic polycurves is weaker than that of hyperbolic curves in some sense.

Acknowledgements: The author thanks Yuichiro Hoshi for various useful comments, and especially for the following: (i) informing me of the arguments used in Theorem \ref{assumesection}; (ii) explaining to me various results about the Grothendieck section conjecture.
This work was supported by the Research Institute for Mathematical Sciences, an International Joint Usage/Research Center located in Kyoto University.\\

\noindent\textbf{Terminologies for outer homomorphisms of groups:}
Let $G_{1}$ and $G_{2}$ be profinite groups.
An outer homomorphism $G_{1} \to G_{2}$ is defined to be an equivalence class of continuous homomorphisms $G_{1} \to G_{2}$, where two such homomorphisms are considered equivalent if they differ by composition with an inner automorphism of $G_{2}$.
Let $\phi:G_{1}\to G_{2}$ be an outer group homomorphism.
Note that the kernel of $\phi$ is uniquely determined and the image of $\phi$ is determined uniquely up to conjugation.
We shall say that $\phi$ is open (or, alternatively, $\phi$ is an outer open homomorphism) if the image of $\phi$ is open.

\label{Intro}

\section{Notation and basic properties of the \'etale fundamental groups of hyperbolic curves}
In this section, we fix some notations and definitions.
We also prove some properties of inertia subgroups of the \'etale fundamental groups of hyperbolic curves (cf.\,Proposition \ref{decompchange}).

We start with the definition of hyperbolic curves.

\begin{dfn}
Let $S$ be a scheme.
\begin{enumerate}
\item We shall say that a scheme $X$ is a hyperbolic curve over $S$ if the following conditions are satisfied:
\begin{itemize}
\item $X$ is a scheme over $S$.
\item There exists a scheme $\overline{X}$ proper smooth over $S$ with connected $1$-dimensional geometric fibers of genus $g$.
\item There exists an effective Cartier divisor $D$ of $\overline{X}$ which is finite \'etale over $S$ of rank $r$.
\item The open subscheme $\overline{X}\setminus D$ of $\overline{X}$ is isomorphic to $X$ over $S$.
\item $2g+r-2>0$.
\end{itemize}
\item We shall say that $X_{2} \rightarrow X_{1} \rightarrow S$ is a hyperbolic polycurve of relative dimension $2$ over $S$ if $X_{2} \rightarrow X_{1}$ and $X_{1} \rightarrow S$ are hyperbolic curves.
\end{enumerate}
\label{hyperbolic}
\end{dfn}

\begin{rem}
Let $S$ be a normal scheme and $X$ a hyperbolic curve over $S$.
Then a pair of schemes $(\overline{X}, D)$ which satisfies the conditions in Definition \ref{hyperbolic}.1 is uniquely determined by $X$ up to canonical isomorphism from the argument given in the discussion entitled “Curves” in \cite{Moch4} $\S 0$.
We shall refer to $D$ as the divisor of cusps of the hyperbolic curve $X\rightarrow S$.
\label{cptunique}
\end{rem}

\begin{dfn}
Let $K$ be a field.
We shall say that a scheme $X$ over $K$ is a variety if the morphism $X \rightarrow \Spec K$ is separated and of finite type with geometrically connected fibers.
\label{geometrically}
\end{dfn}

\begin{dfn}
Let $p$ be a prime number.
We shall say that a field $K$ is a sub-$p$-adic field if there exist a finitely generated extension field $L$ over $\Q_{p}$ and an injective homomorphism from $K$ to $L$.
\label{subp}
\end{dfn}

\begin{prop}
Let $S$ be a connected locally Noetherian separated normal scheme over $\Q$ and $X \rightarrow S$ a hyperbolic curve.
Write $D$ for the divisor of cusps of $X\rightarrow S$.
\begin{enumerate}
\item
The divisor $D$ is a disjoint union of finitely many normal schemes which are \'etale over $S$.
\item
Let $D_{0}$ be an irreducible component of $D$.
Take a geometric point $\ast$ of $X$.
Choose a decomposition group $G_{d}$ of $D_{0}$ in $\pi_{1}(X, \ast)$ and write $\overline{G_{d}}$ for the image of $G_{d}$ in $\pi_{1}(S,\ast)$.
Then we have the following natural commutative diagram of profinite groups with exact horizontal lines and injective vertical arrows:
\begin{equation}
\xymatrix{
1\ar[r]&\ZZ(1)\ar[r] \ar@{_{(}->}[d] 
&G_{d}\ar@{_{(}->}[d]\ar[r]&\overline{G_{d}}\ar[r]\ar@{_{(}->}[d]&1\\
1\ar[r]&\Delta_{X/S} \ar[r] 
&\pi_{1}(X, \ast)\ar[r]&\pi_{1}(S, \ast)\ar[r]&1.
}
\end{equation}
Here, we write $\Delta_{X/S}$ for the kernel of the homomorphism $\pi_{1}(X, \ast) \to \pi_{1}(S, \ast)$.
Moreover, $\overline{G_{d}}$ is isomorphic to the \'etale fundamental group of $D_{0}$ in a canonical way up to inner automorphism of $\pi_{1}(S,\ast)$.
\item
Let $S'$ be another connected locally Noetherian separated normal scheme and $S' \rightarrow S$ a dominant morphism.
Suppose that $\ast\to X$ factors through $\ast\to X\times_{S}S'\to X$.
Write $D'_{0}$ for the irreducible component of the divisor of cusps of $X\times_{S}S'\rightarrow S'$ over $D_{0}$ determined by $G_{d}$ and $G'_{d}$ for the decomposition group of $D'_{0}$ in $\pi_{1}(X\times_{S}S', \ast)$ over $G_{d}$.
Then we have a natural isomorphism $G'_{d}\cong G_{d}\times_{\pi_{1}(S,\ast)}\pi_{1}(S',\ast)$.
\end{enumerate}
\label{decompchange}
\end{prop}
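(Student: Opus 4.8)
The plan is to reduce all three assertions to the local structure of a hyperbolic curve in a neighbourhood of a cusp, together with the homotopy exact sequence and the base-change isomorphism recorded in Proposition \ref{fundamental}. For (1), since $D\to S$ is finite \'etale and $S$ is normal, $D$ is normal; being locally Noetherian it is the disjoint union of its connected components, each of which is an integral normal scheme finite \'etale over $S$, and there are finitely many of them since $D\to S$ is finite of rank $r$. Thus $D=\bigsqcup_i D_i$ with each $D_i\to S$ connected finite \'etale, which is all of (1).

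For (2), I would first fix the meaning of $G_d$: with $\widetilde X\to X$ the pro-universal covering attached to $\ast$, let $\widetilde{\overline X}$ be the normalization of $\overline X$ in $\widetilde X$, and let $G_d\subseteq\pi_1(X,\ast)$ be the decomposition group of a chosen point of $\widetilde{\overline X}$ lying over the generic point of $D_0$ (well defined up to conjugacy, and rigidified by a choice compatible with $\ast$). The crucial input is the local normal-coordinate statement: since $\overline X\to S$ is smooth of relative dimension $1$ and $D_0\to S$ is \'etale, $D_0$ is a relative effective Cartier divisor, so \'etale-locally on $S$ a neighbourhood of $D_0$ in $\overline X$ is identified with a neighbourhood of the zero section in $\mathbb{A}^1_S$, taking $D_0$ to the zero section. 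Consequently the ramification of $\widetilde{\overline X}\to\overline X$ along $D_0$ is pulled back from the ramification of the Kummer coverings $t\mapsto t^{1/n}$ of the standard relative punctured disc; as we are in characteristic $0$ this ramification is tame, and a purity/comparison argument identifies the inertia subgroup $I_d\trianglelefteq G_d$ canonically with $\widehat{\mathbb{Z}}(1)=\varprojlim_n\mu_n$. It then remains to observe that $I_d\subseteq\Delta_{X/S}$, because the loop around the horizontal divisor $D_0$ maps to $1$ in $\pi_1(S,\ast)$, and that $G_d\cap\Delta_{X/S}=I_d$ (this intersection is the fibrewise inertia over a geometric point of $S$, which already exhausts the fibrewise decomposition group), so $G_d/I_d$ injects into $\pi_1(S,\ast)$; surjectivity onto $\pi_1(S,\ast)$ comes from the homotopy exact sequence for the relative punctured tubular neighbourhood of $D_0$ supplied by the local model (cf.\,Proposition \ref{fundamental}.1). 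Commutativity of the diagram and injectivity of the vertical arrows are then formal.

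For (3), the proper smooth model of $X\times_SS'\to S'$ is $\overline X\times_SS'$ with cuspidal divisor $D\times_SS'$ (finite \'etale over $S'$ by (1)), so $D'_0$ is a connected component of $D_0\times_SS'$. By Proposition \ref{fundamental}.2, $\pi_1(X\times_SS',\ast')\cong\pi_1(X,\ast')\times_{\pi_1(S,\ast')}\pi_1(S',\ast')$, and the formation of the normalization in the universal covering, and hence of the decomposition group of a cusp, is compatible with this base change; I would track the chosen points so that $G'_d$ lies over $G_d$. The tame inertia $\widehat{\mathbb{Z}}(1)$ is insensitive to the base change $S'\to S$ (the local model $\mathbb{A}^1$ base-changes and the Kummer coverings are unchanged), so $I'_d\xrightarrow{\ \sim\ }I_d$, while on quotients $G'_d/I'_d\to\pi_1(S',\ast')$ is the isomorphism given by (2) for $X\times_SS'\to S'$. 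Feeding these identifications into $\pi_1(X\times_SS',\ast')=\pi_1(X,\ast')\times_{\pi_1(S,\ast')}\pi_1(S',\ast')$ exhibits $G'_d$ as $G_d\times_{\pi_1(S,\ast')}\pi_1(S',\ast')$.

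The step I expect to be the main obstacle is the canonical identification $I_d\cong\widehat{\mathbb{Z}}(1)$ in (2): one must make precise the \'etale-local standard model near a cusp and combine tameness in characteristic $0$ with a purity argument to be sure that the global ramification of $\widetilde{\overline X}\to\overline X$ along $D_0$ is governed by this codimension-one datum, with the Tate twist placed correctly. A recurring secondary nuisance is the bookkeeping of geometric base points and of conjugacy classes (decomposition groups being defined only up to conjugacy), which has to be arranged coherently so that the diagrams in (2) and (3) commute on the nose and the asserted injectivity of the vertical arrows genuinely holds.
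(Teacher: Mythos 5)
Your proposal is correct in substance for assertions 1 and 3 (assertion 1 is immediate from $D\to S$ being finite \'etale, and assertion 3 is, both for you and for the paper, a formal consequence of assertion 2 together with Proposition \ref{fundamental}.2), but for assertion 2 you take a genuinely different route from the paper. You argue locally at the cusp: an \'etale-local identification of a neighbourhood of $D_0$ in $\overline{X}$ with a neighbourhood of the zero section of $\mathbb{A}^1_S$, Kummer coverings, tameness in characteristic $0$, and a purity argument to pin down the inertia as $\ZZ(1)$, followed by a separate argument for $G_d\cap\Delta_{X/S}=I_d$ and for surjectivity onto $\pi_1(S,\ast)$. The paper instead base-changes to the generic point of $S$: it forms $X_{K(S)}=X\times_S\Spec K(S)$, invokes Tamagawa's structure theorem (\cite{Tama1} Lemma 2.2) for the decomposition group of a cusp of a hyperbolic curve over a field, which hands over the exact sequence $1\to\ZZ(1)\to G_d^{K(S)}\to G_{K(S)}\to 1$ in one stroke, and then pushes this down along the surjections $G_d^{K(S)}\twoheadrightarrow G_d$ and $G_{K(S)}\twoheadrightarrow\pi_1(S,\ast)$, using that $\Delta_{X/S}$ is unchanged under this base change. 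The paper's route buys exactly the two things you flag as your main obstacles: the correctly Tate-twisted identification of the inertia, and the surjectivity of $G_d\to\pi_1(S,\ast)$, both of which come packaged in the field case and survive the (surjective) specialization map. Your route is more self-contained but correspondingly heavier.

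The one place where your sketch is genuinely underpowered rather than merely longer is the surjectivity of $G_d\to\pi_1(S,\ast)$: the ``homotopy exact sequence for the relative punctured tubular neighbourhood of $D_0$'' is not an object covered by Proposition \ref{fundamental}.1 (which concerns hyperbolic curves over a normal base), and making it precise requires introducing the punctured henselian or formal neighbourhood of $D_0$ and its fundamental group, i.e.\ essentially redoing the local theory that Tamagawa's lemma encapsulates. Similarly, the identity $G_d\cap\Delta_{X/S}=I_d$ is asserted via ``the fibrewise decomposition group equals the fibrewise inertia''; this is true over an algebraically closed residue field, but you must first identify $G_d\cap\Delta_{X/S}$ with the decomposition group of a cusp of the geometric generic fibre, which is a compatibility statement of the same nature as the one being proved. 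Both gaps are fillable, but the cleanest repair is precisely the paper's reduction to the generic fibre.
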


\begin{proof}
Since the morphism $D \rightarrow S$ is \'etale, the assertion 1 holds.
Next, we show the assertion 2.
We may assume that $\ast$ is a geometric generic point.
Let $K(S)$ be the function field of $S$ and $G_{K(S)}$ the absolute Galois group of $K(S)$ determined by $\ast$.
Write $X_{K(S)}$ for the scheme $X\times_{S}\Spec K(S)$. 
Then $D_{0}\times_{S}\Spec K(S)$ is an irreducible component of the divisor of cusps of the hyperbolic curve $X_{K(S)} \rightarrow \Spec K(S)$.
Choose a decomposition group $G^{K(S)}_{d}$ of $D_{0}\times_{S}\Spec K(S)$ in $\pi_{1}(X_{K(S)}, \ast)$ over $G_{d}$ and write $\overline{G_{d}}^{K(S)}$ for the image of $G_{d}^{K(S)}$ in $G_{K(S)}$.
We obtain the following diagram of profinite groups with exact horizontal lines by \cite{Tama1} Lemma (2.2) and \cite{Ho} Proposition 2.4 (i)(ii):
\begin{equation*}
\xymatrix{
1\ar[r]&\ZZ(1)\ar[r] \ar@{_{(}->}[d] 
&G^{K(S)}_{d}\ar@{_{(}->}[d]\ar[r]&\overline{G_{d}}^{K(S)}\ar[r]\ar@{_{(}->}[d]&1\\
1\ar[r]&\Delta_{X/S} \ar[r]  \ar@{=}[d] 
&\pi_{1}(X_{K(S)}, \ast)\ar@{->>}[d]\ar[r]&G_{K(S)}\ar[r]\ar@{->>}[d]&1\\
1\ar[r]&\Delta_{X/S} \ar[r] 
&\pi_{1}(X, \ast)\ar[r]&\pi_{1}(S, \ast)\ar[r]&1.
}
\end{equation*}
Note that $D_{0}\times_{S}\Spec K(S)$ is the spectrum of a finite separable extension field of $K(S)$ and, by \cite{Tama1} Lemma (2.2), $\overline{G_{d}}^{K(S)}$ is isomorphic to the absolute Galois group of this field.
Since the homomorphism $G^{K(S)}_{d}\rightarrow G_{d}$ is surjective and $D_{0}$ is finite \'etale over $S$, the assertion 2 holds.
The assertion 3 follows from the assertion 2 and \cite{Ho} Proposition 2.4 (i)(ii).
\end{proof}

\label{convention}

\section{The Grothendieck section conjecture}
In this section, we recall the Grothendieck section conjecture for hyperbolic curves over sub-$p$-adic fields.

Let $K$ be a field of characteristic $0$, $\overline{K}$ an algebraic closure of $K$, $G_{K}$ the absolute Galois group $\mathrm{Gal}\,(\overline{K}/K)$, $X$ a hyperbolic curve over $K$, and $D$ the divisor of cusps of the hyperbolic curve $X$.
Write $X_{\overline{K}}$ for the scheme $X\times_{\Spec K}\Spec \overline{K}$.
Take a geometric point $\ast$ of $X_{\overline{K}}$.
Write $\mathrm{Sect}_{G_{K}}(\pi_{1}(X,\ast))$ for the set of continuous sections of the homomorphism $\pi_{1}(X,\ast)\rightarrow G_{K}$.

First, we state ``the Grothendieck section conjecture'' in a general setting.
\begin{conj}[cf.\,Question \ref{question}.3]
\begin{enumerate}
\item Suppose that $X$ is a proper hyperbolic curve over $K$.
Then the natural map
\begin{equation}
X(K) \to \mathrm{Sect}_{G_{K}}(\pi_{1}(X, \ast))/\mathrm{Inn}\,\pi_{1}(X_{\overline{K}}, \ast)
\label{sectionconj}
\end{equation}
is bijective.
\item Write $\mathrm{Sect}_{G_{K}}^{CD}(\pi_{1}(X,\ast))$ for the subset of $\mathrm{Sect}_{G_{K}}(\pi_{1}(X,\ast))$ consisting of sections whose images are contained in a decomposition group of some closed point of $D$.
Then the map (\ref{sectionconj}) induces a map
\begin{equation}
X(K)\rightarrow (\mathrm{Sect}_{G_{K}}(\pi_{1}(X,\ast))\setminus \mathrm{Sect}_{G_{K}}^{CD}(\pi_{1}(X,\ast)))/\mathrm{Inn}\,\pi_{1}(X_{\overline{K}},\ast)
\label{section}
\end{equation}
and this map is bijective (cf.\,Example \ref{decompsection}.1).
\end{enumerate}
\label{section conjecture}
\end{conj}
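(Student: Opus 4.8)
The plan is to split the map (\ref{sectionconj}) into its injectivity and surjectivity, to treat the two parts of the conjecture in tandem, and to isolate the genuinely hard input. First I would dispose of \textbf{injectivity}. Two sections arising from distinct $K$-rational points $x \neq x'$ must be shown to be non-conjugate under $\pi_{1}(X_{\overline{K}},\ast)$, which is precisely the content already available as Proposition \ref{injectivity} (Mochizuki). Over a sub-$p$-adic field this is immediate; over a general field of characteristic $0$ I would reduce to the generalized sub-$p$-adic case recorded in Remark \ref{generalized} by a standard Noetherian descent, observing that the points together with the profinite data witnessing a putative conjugacy are defined over a subfield finitely generated over $\Q$, and that every such field is generalized sub-$p$-adic. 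Thus injectivity is, in all cases of present interest, the tractable half.

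The remaining and essential content is \textbf{surjectivity}: every continuous section $s\colon G_{K} \to \pi_{1}(X,\ast)$ of $\pi_{1}(X,\ast)\to G_{K}$ (in part 2, every non-cuspidal section) should be, up to $\pi_{1}(X_{\overline{K}},\ast)$-conjugacy, the section $s_{x}$ attached to some $x \in X(K)$. The approach I would take is the classical ``limit of neighborhoods'' construction. For each open subgroup $U \le \pi_{1}(X,\ast)$ with $s(G_{K}) \subseteq U$, the corresponding connected finite \'etale covering $X_{U} \to X$ carries a distinguished reduction of $s$, hence a nonempty $K$-subscheme lying under $\ast$; these $U$ form a filtered system, and as $U$ shrinks the covers $X_{U}$ assemble into a tower. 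I would form the inverse limit of the associated closed subschemes and invoke properness of $X$ (in the proper case $D = \emptyset$, so $X = \overline{X}$ in the sense of Definition \ref{hyperbolic}.1) together with the valuative criterion to extract a nonempty limit. Galois descent --- using that the entire construction is $G_{K}$-equivariant precisely because $s$ is a section over $G_{K}$ --- should then force this limit to be a single $K$-rational point $x$, and unwinding the definitions would identify $s$ with $s_{x}$ up to conjugacy.

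For part 2 I would deduce the affine statement from the proper one. The open immersion $X = \overline{X} \setminus D \hookrightarrow \overline{X}$ induces a surjection $\pi_{1}(X,\ast) \twoheadrightarrow \pi_{1}(\overline{X},\ast)$, so any section $s$ of $\pi_{1}(X,\ast)\to G_{K}$ pushes forward to a section $\overline{s}$ of $\pi_{1}(\overline{X},\ast)\to G_{K}$. By part 1, $\overline{s}$ arises from a point $\overline{x} \in \overline{X}(K)$, which lies either in $X(K)$ --- giving the desired point --- or in the cusp divisor $D(K)$. In the latter case I would show, using the description of cusp decomposition groups in Proposition \ref{decompchange}.2, that $s$ is conjugate into a decomposition group of the corresponding cusp, i.e.\ that $s$ is cuspidal; excluding such sections is exactly the role of $\mathrm{Sect}_{G_{K}}^{CD}$ in (\ref{section}), so that non-cuspidal sections descend to genuine points of $X$, while a cuspidal section need not come from an $X$-point (cf.\ Example \ref{decompsection}.1).

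The hard part --- and the reason this remains a conjecture rather than a theorem --- is the surjectivity step above. The obstruction is that there is no known mechanism to guarantee the limit of neighborhoods is nonempty over a general field: each finite stage $X_{U}$ carries a $K$-point, yet the inverse limit of these point-sets may be empty, and properness alone does not exclude this. Mochizuki's $p$-adic Hodge-theoretic methods, which power Proposition \ref{injectivity} and the Hom/Isom results of Proposition \ref{1hom}, deliver only injectivity and give no handle on \emph{producing} a rational point from an arbitrary section. Consequently, although injectivity and the reduction of part 2 to part 1 are within reach by the cited results, surjectivity would require a genuinely new input; this is the open heart of anabelian geometry, and the conditional hypothesis of Theorem \ref{introsectionhom} reflects exactly this gap.
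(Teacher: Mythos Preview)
The statement is a \emph{conjecture}, and the paper does not prove it; the only thing established in the paper is Lemma \ref{well-def}, namely that the map (\ref{section}) is well-defined and injective when $K$ is sub-$p$-adic. You correctly recognise that surjectivity is open, so there is no proof in the paper to compare your proposal against.

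That said, your sketch of the surjectivity strategy contains a genuine inversion of where the difficulty lies. You write that ``each finite stage $X_{U}$ carries a $K$-point, yet the inverse limit of these point-sets may be empty''. In fact a section $s$ with $s(G_{K})\subset U$ only forces $U$ to surject onto $G_{K}$, i.e.\ $X_{U}$ is geometrically connected over $K$; it does \emph{not} produce a $K$-point of $X_{U}$. Exhibiting such a point is already an instance of the Section Conjecture (for $X_{U}$). Conversely, over a $p$-adic field, if one somehow knew every $X_{U}(K)$ were nonempty, then properness makes each $X_{U}(K)$ compact in the $p$-adic topology and the inverse limit is automatically nonempty; this is a standard reformulation. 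So the obstruction is producing rational points at each finite level, not the passage to the limit.

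Your reduction of part 2 to part 1 also skips a nontrivial step. If the pushforward $\overline{s}$ corresponds to a cusp $\overline{x}\in D(K)$, it does not follow formally that $s$ is $\pi_{1}(X_{\overline{K}},\ast)$-conjugate into the decomposition group of that cusp in $\pi_{1}(X,\ast)$: the kernel of $\pi_{1}(X,\ast)\twoheadrightarrow\pi_{1}(\overline{X},\ast)$ is normally generated by \emph{all} cuspidal inertia groups, and pinning $s$ down inside a single one requires further argument (and, in the generality you state, is itself not known).
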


\begin{lem}
Suppose that $K$ is a sub-$p$-adic field.
The map (\ref{section}) is well-defined and injective.
\label{well-def}
\end{lem}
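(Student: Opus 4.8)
The plan is to reduce both assertions to Proposition \ref{injectivity}, which already provides injectivity of the map $X(K)\to \mathrm{Sect}_{G_{K}}(\pi_{1}(X,\ast))/\mathrm{Inn}\,\pi_{1}(X_{\overline{K}},\ast)$, together with a geometric input that separates $K$-rational points of $X$ from the cusps. First I would address well-definedness: the content here is that for a $K$-rational point $x$ of $X$, the associated section $G_{K}\to\pi_{1}(X,\ast)$ (well-defined up to $\pi_{1}(X_{\overline{K}},\ast)$-conjugacy) never lands in a decomposition group of a cusp. Writing $\overline{X}$ for the smooth compactification and $D$ for the divisor of cusps (cf.\,Remark \ref{cptunique}), a section lying in a cuspidal decomposition group would, via the natural map $\pi_{1}(X,\ast)\to\pi_{1}(\overline{X},\ast)$ and the description of decomposition groups, correspond to a point of $D(K)$; but $x\in X(K)$ is disjoint from $D$, so the two sections cannot be $\pi_{1}(\overline{X}_{\overline{K}},\ast)$-conjugate. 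Concretely I would invoke Proposition \ref{injectivity} applied to $\overline{X}$ (a proper hyperbolic curve when $D\neq\varnothing$; when $D=\varnothing$ there is nothing to prove since then $X=\overline{X}$ and there are no cusps): the section attached to $x$ and the section attached to any cusp $d\in D(K)$ have distinct images in $\mathrm{Sect}_{G_{K}}(\pi_{1}(\overline{X},\ast))/\mathrm{Inn}$, hence a fortiori the section of $x$ does not meet a cuspidal decomposition subgroup. This shows the image of $x$ in \eqref{section} actually lies in the indicated complement, so the map is well-defined.

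Next, for injectivity of \eqref{section}: suppose $x_{1},x_{2}\in X(K)$ give sections that are $\pi_{1}(X_{\overline{K}},\ast)$-conjugate in $\mathrm{Sect}_{G_{K}}(\pi_{1}(X,\ast))$. Then they are equal as classes in $\mathrm{Sect}_{G_{K}}(\pi_{1}(X,\ast))/\mathrm{Inn}\,\pi_{1}(X_{\overline{K}},\ast)$, and Proposition \ref{injectivity} (valid since $K$ is sub-$p$-adic, indeed generalized sub-$p$-adic by Remark \ref{generalized}) gives $x_{1}=x_{2}$. So injectivity of \eqref{section} is immediate from injectivity of the full section map \eqref{sectionconj}-type map for $X$ itself; the only subtlety is that \eqref{section} has a restricted target, but restricting the target of an injective map keeps it injective, and well-definedness is exactly what the previous paragraph supplies.

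The main obstacle I anticipate is not the injectivity — that is essentially a citation of Proposition \ref{injectivity} — but pinning down the well-definedness cleanly, i.e.\ making precise the claim that the section of a genuine $K$-rational point of the open curve cannot be conjugate into a cuspidal decomposition group. The careful way to do this is to use the commutative diagram of Proposition \ref{decompchange}.2, which identifies a decomposition group $G_{d}$ of a cusp $D_{0}$ as an extension $1\to\ZZ(1)\to G_{d}\to\pi_{1}(S,\ast)\to 1$ sitting inside $\pi_{1}(X,\ast)$; pushing forward along $\pi_{1}(X,\ast)\to\pi_{1}(\overline{X},\ast)$ kills the $\ZZ(1)$ and realizes the image of $G_{d}$ as (the decomposition group of) a $K$-point of $\overline{X}$ supported on $D$. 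A section through $G_{d}$ therefore maps to the section of that $D$-point in $\pi_{1}(\overline{X},\ast)$; since $x\in X(K)$ maps to a point of $\overline{X}(K)\setminus D$, and Proposition \ref{injectivity} separates distinct $\overline{K}$-points of the proper curve $\overline{X}$ at the level of section-conjugacy classes, the two sections are not conjugate, contradicting the assumption. I would also remark that this is precisely the phenomenon anticipated in Conjecture \ref{section conjecture}.2 and illustrated in Example \ref{decompsection}.1, so the restriction of the target in \eqref{section} is the correct one.
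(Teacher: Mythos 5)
Your injectivity argument coincides with the paper's: it is a direct application of Proposition \ref{injectivity}, and restricting the target of an injective map preserves injectivity. The problem is in the well-definedness part. You assert that $\overline{X}$ is ``a proper hyperbolic curve when $D\neq\varnothing$'', but this is false: a hyperbolic curve of type $(g,r)$ with $r>0$ only requires $2g-2+r>0$, so $\overline{X}$ may well be $\mathbb{P}^{1}_{K}$ (e.g.\ $X=\mathbb{P}^{1}_{K}\setminus\{0,1,\infty\}$) or an elliptic curve, and neither is hyperbolic. In those cases Proposition \ref{injectivity} cannot be applied to $\overline{X}$, and your pushforward argument collapses: for $\overline{X}=\mathbb{P}^{1}_{K}$ one has $\pi_{1}(\overline{X}_{\overline{K}},\ast)=\{1\}$, so the images in $\pi_{1}(\overline{X},\ast)=G_{K}$ of the section of $x$ and of a cuspidal section are literally equal and no contradiction is produced. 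As written, your argument proves well-definedness only when $g(\overline{X})\geq 2$.

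The missing step --- which is precisely what the proof of \cite{Moch3} Theorem 1.3 (iv), cited by the paper, provides --- is a reduction via finite \'etale covers. If the section $s_{x}$ attached to $x\in X(K)$ had image contained in a cuspidal decomposition group $G_{d}$, then for every open subgroup of $\pi_{1}(X,\ast)$ containing $\mathrm{Im}(s_{x})$ the corresponding connected finite \'etale cover $X'\to X$ carries a $K$-rational lift $x'$ of $x$ together with a cusp $c'$ of $X'$ whose decomposition group still contains $\mathrm{Im}(s_{x})$ (and which is forced to be $K$-rational, since that decomposition group surjects onto the absolute Galois group of the residue field of $c'$). Choosing the cover so that the genus of $\overline{X'}$ is at least $2$ --- always possible for a hyperbolic curve --- one can then run your pushforward argument on $\overline{X'}$ and invoke Proposition \ref{injectivity} there to separate $x'$ from $c'$. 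Without this passage to a cover the proof does not go through for affine hyperbolic curves of genus $0$ or $1$, which are exactly the cases where the phenomenon of Example \ref{decompsection} is most visible.
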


\begin{proof}
The well-definedness portion follows from the proof of \cite{Moch3} Theorem 1.3 (iv) and \cite{Moch} Theorem C.
The injectivity portion follows from \cite{Moch} Theorem C.
\end{proof}

\begin{rem}
\begin{enumerate}
\item
Suppose that $K$ is a generalized sub-$p$-adic (not necessarily sub-$p$-adic) field.
In this case, as written in \cite{Ho2} Introduction, the injectivity portion of Lemma \ref{well-def} also holds (cf.\,the proof of \cite{Moch} Theorem C and \cite{Moch2} Theorem 4.12 and Remark following Theorem 4.12).
Moreover, the well-definedness portion of Lemma \ref{well-def} also holds by its proof.
\item There exist (generalized) sub-$p$-adic fields such that the Grothendieck section conjecture does not hold for hyperbolic curves over them.
Let $p$ be a prime number and suppose that $K$ is the field of fractions of a henselization of $\Z_{(p)}$.
Write $\widehat{K}$ for the completion of the field $K$.
Let $\overline{\widehat{K}}$ be an algebraic closure of $\widehat{K}$ and fix an embedding $\overline{K}\hookrightarrow \overline{\widehat{K}}$ over $K$.
Then we have $\mathrm{Gal}(\overline{K}/K) \cong \mathrm{Gal}(\overline{\widehat{K}}/\widehat{K})$.
Suppose that $X(\widehat{K})$ has uncountably infinitely many $\widehat{K}$-rational points.
(For example, suppose that $X$ has a $K$-rational point $x$ and a finite morphism $X\to\mathbb{P}^{1}_{K}$ \'etale at $x$.
Then, by the theory of locally analytic manifolds and the implicit function theorem, $X$ has uncountably infinitely many $\widehat{K}$-rational points.)
Since the cardinality of the set $X(K)$ is at most countable, the induced map $X(K) \to X(\widehat{K})$ is not surjective.
Therefore, the Grothendieck section conjecture for $X$ does not hold.
\end{enumerate}
\label{henselexam}
\end{rem}

\begin{exam}
Suppose that $D$ has a $K$-rational point $x$.
\begin{enumerate}
\item
We show that, in the case where $X$ is affine, the map (\ref{sectionconj}) is not surjective in general.
The decomposition group of $x$ in the fundamental group $\pi_{1}(X, \ast)$ is isomorphic to the absolute Galois group $G_{K((T))}$ of the field of Laurent series over $K$ by \cite{Tama1} Lemma (2.2).
Since the characteristic of $K$ is $0$, there exists a continuous section of the homomorphism $G_{K((T))}\rightarrow G_{K}$.
(Indeed, we can construct such a section by considering a compatible system $(T^{1/n})_{n\geq 1}$.)
Therefore, we obtain a section $G_{K}\rightarrow \pi_{1}(X, \ast)$ which is not defined by a rational point of $X$ by Lemma \ref{well-def}.
\item
Here, we give an example of outer homomorphism over $G_{K}$ between the \'etale fundamental group of hyperbolic curves over $K$.
We do not fix geometric points and do not write base points of \'etale fundamental groups.
The morphism $\Spec K((T)) \rightarrow \Spec K[T,1/T]$ induces an outer isomorphism
$$(G_{K((T))}=\,)\,\pi_{1}(\Spec K((T)) ) \rightarrow \pi_{1}(\Spec K[T,\frac{1}{T}] )$$
between their fundamental groups.
By composing the surjective outer homomorphism $\pi_{1}(\mathbb{P}^{1}_{K}\setminus\{0,1,\infty\})\rightarrow \pi_{1}(\Spec K[T,\frac{1}{T}])$ induced by the open immersion $\mathbb{P}^{1}_{K}\setminus\{0,1,\infty\}\to \Spec K[T,\frac{1}{T}]$, the inverse of the above outer isomorphism, and an outer isomorphism from $G_{K((T))}$ to a decomposition group of $x$ in $\pi_{1}(X)$, we obtain an outer homomorphism $\phi: \pi_{1}(\mathbb{P}^{1}_{K}\setminus\{0,1,\infty\})\to\pi_{1}(X)$ whose image is a decomposition group of $x$.
Therefore, $\mathrm{Im}\,\phi$ neither is open in $\pi_{1}(X)$ nor determines a section of the homomorphism $\pi_{1}(X) \rightarrow G_{K}$.
\end{enumerate}
\label{decompsection}
\end{exam}

\label{conjecturestate}

\section{Sections for hyperbolic polycurves of dimension $2$ }
In this section, we prove the Hom version of the Grothendieck conjecture for morphisms from regular varieties to hyperbolic polycurves of dimension $2$ over sub-$p$-adic fields under the assumption that the Grothendieck section conjecture for hyperbolic curves holds.

Let $K$ be a field of characteristic $0$, $X_{2}\rightarrow X_{1}\rightarrow \Spec K$ a hyperbolic polycurve of dimension $2$ over $K$, $K_{1}$ the function field of $X_{1}$, $\overline{K}_{1}$ an algebraic closure of $K_{1}$, and $\overline{K}$ the algebraic closure of $K$ in $K_{1}$.
Write $G_{K}$ (resp.\,$G_{K_{1}}$) for the absolute Galois group $\mathrm{Gal}(\overline{K}_{1}/K_{1})$ (resp.\,$\mathrm{Gal}(\overline{K}/K)$) and $X_{2, K_{1}}$ for the scheme $X_{2}\times_{X_{1}}\Spec K_{1}$.
In this section, for any normal variety $W$ over $K$ or $K_{1}$, we consider a geometric point of $W\times_{\Spec K}\Spec \overline{K}$ or $W\times_{\Spec K_{1}}\Spec \overline{K}_{1}$ and write $\Pi_{W}$ (resp.\,$\Delta_{W}$) for the \'etale fundamental group of $W$ (resp.\,$W\times_{\Spec K}\Spec \overline{K}$ or $W\times_{\Spec K_{1}}\Spec \overline{K}_{1}$).
We omit base points of \'etale fundamental groups in this notation, because we only consider outer homomorphisms unless otherwise noted.
Write $\Delta_{2,1}$ for the kernel of the homomorphism $\Pi_{X_{2}}\to\Pi_{X_{1}}$ induced by the structure morphism  $X_{2} \to X_{1}$.
Since the profinite group $\Pi_{X_{2,K_{1}}}$ is isomorphic to the profinite group $\Pi_{X_{2}}\times_{\Pi_{X_{1}}}G_{K_{1}}$ by \cite{Ho} Proposition 2.4 (ii), we have the following commutative diagram of profinite groups with exact horizontal lines:
\begin{align*}
\xymatrix{
1\ar[r]&\Delta_{2,1}\ar@{=}[d]\ar[r]&\Pi_{X_{2,K_{1}}}\ar[d]\ar[r]&G_{K_{1}}\ar[d]\ar[r]&1\\
1\ar[r]&\Delta_{2,1}\ar[r]&\Pi_{X_{2}}\ar[r]&\Pi_{X_{1}}\ar[r]&1.
}
\end{align*}
We write $\mathrm{Sect}_{\Pi_{X_{1}}}(\Pi_{X_{2}})$ for the set of continuous sections of the homomorphism $\Pi_{X_{2}}\to \Pi_{X_{1}}$.
Let $(\overline{X}_{2}, D)$ be the smooth compactification of the hyperbolic curve $X_{2} \rightarrow X_{1}$ (cf.\,Remark \ref{cptunique}).
Since $X_{1}$ is normal, we have a decomposition $D=\underset{1\leq i \leq n}{\amalg} D_{i}$ by Proposition \ref{decompchange}.1, where each $D_{i}$ is a normal scheme.
Write $\theta_{i}$ for the generic point of $D_{i}$.
We shall write $\mathrm{Sect}^{CD}_{\Pi_{X_{1}}}(\Pi_{X_{2}})$ for the set of continuous sections of the homomorphism $\Pi_{X_{2}}\rightarrow \Pi_{X_{1}}$ whose images are contained in a decomposition group of some $\theta_{i}$ in $\Pi_{X_{2}}$.

\begin{lem} There exists a natural injective map
\begin{equation*}
\mathrm{Sect}_{\Pi_{X_{1}}}(\Pi_{X_{2}})/\mathrm{Inn}\,(\Delta_{2,1})
\rightarrow \mathrm{Sect}_{G_{K_{1}}}(\Pi_{X_{2,K_{1}}})/\mathrm{Inn}\,(\Delta_{2,1})
\end{equation*}
which induces a map
\begin{equation*}
\mathrm{Sect}^{CD}_{\Pi_{X_{1}}}(\Pi_{X_{2}})/\mathrm{Inn}\,(\Delta_{2,1})
\rightarrow \mathrm{Sect}^{CD}_{G_{K_{1}}}(\Pi_{X_{2,K_{1}}})/\mathrm{Inn}\,(\Delta_{2,1})
\end{equation*}
and a map
\begin{equation*}
\begin{split}
&(\mathrm{Sect}_{\Pi_{X_{1}}}(\Pi_{X_{2}})\setminus\mathrm{Sect}^{CD}_{\Pi_{X_{1}}}(\Pi_{X_{2}}))/\mathrm{Inn}\,(\Delta_{2,1})\\
\rightarrow& (\mathrm{Sect}_{G_{K_{1}}}(\Pi_{X_{2,K_{1}}})\setminus\mathrm{Sect}^{CD}_{G_{K_{1}}}(\Pi_{X_{2,K_{1}}}))/\mathrm{Inn}\,(\Delta_{2,1}).
\end{split}
\end{equation*}
\label{naturalmap}
\end{lem}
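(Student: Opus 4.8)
The plan is to realize $\pi_{1}(X_{2,K_{1}},\ast)$ as a fiber product via Proposition \ref{fundamental}.2, applied with $S=X_{1}$, $S'=\Spec K_{1}$ and the hyperbolic curve $X_{2}\to X_{1}$: this gives a canonical isomorphism $\pi_{1}(X_{2,K_{1}},\ast)\cong\pi_{1}(X_{2},\ast)\times_{\pi_{1}(X_{1},\ast)}G_{K_{1}}$. Here the structure map $\pi\colon G_{K_{1}}\to\pi_{1}(X_{1},\ast)$ induced by the generic point $\Spec K_{1}\to X_{1}$ is surjective, since for any connected finite \'etale covering $V\to X_{1}$ the scheme $V\times_{X_{1}}\Spec K_{1}=\Spec K(V)$ is connected ($V$ being normal and connected, hence integral). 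Given a section $s\colon\pi_{1}(X_{1},\ast)\to\pi_{1}(X_{2},\ast)$, the pair $(s\circ\pi,\,\mathrm{id}_{G_{K_{1}}})$ factors through the fiber product and defines $\tilde{s}\colon G_{K_{1}}\to\pi_{1}(X_{2,K_{1}},\ast)$, which composed with the second projection $\pi_{1}(X_{2,K_{1}},\ast)\to G_{K_{1}}$ is the identity; so $\tilde{s}$ is a section, and $s\mapsto\tilde{s}$ is the map we want before passing to conjugacy classes.

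Next I would check that $s\mapsto\tilde{s}$ descends to $\mathrm{Inn}(\Delta_{2,1})$-conjugacy classes and is injective on them. The isomorphism of Proposition \ref{fundamental}.2 identifies the copy of $\Delta_{2,1}$ inside $\pi_{1}(X_{2},\ast)$ with the one inside $\pi_{1}(X_{2,K_{1}},\ast)$ (cf.\ Notation-Definition \ref{ca}.2), and the explicit formula for $\tilde{s}$ shows that replacing $s$ by $\delta s\delta^{-1}$, $\delta\in\Delta_{2,1}$, replaces $\tilde{s}$ by $\delta\tilde{s}\delta^{-1}$; hence the map descends. For injectivity, if $\tilde{s'}=\delta\tilde{s}\delta^{-1}$ for some $\delta\in\Delta_{2,1}$, compose with the first projection $\pi_{1}(X_{2,K_{1}},\ast)\to\pi_{1}(X_{2},\ast)$ to get $s'\circ\pi=\delta(s\circ\pi)\delta^{-1}$, and surjectivity of $\pi$ forces $s'=\delta s\delta^{-1}$.

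For compatibility with the cuspidal subsets I would use Proposition \ref{decompchange}.3. By uniqueness of the smooth compactification (Remark \ref{cptunique}) and its stability under the base change $\Spec K_{1}\to X_{1}$, the divisor of cusps of $X_{2,K_{1}}\to\Spec K_{1}$ is $D\times_{X_{1}}\Spec K_{1}=\coprod_{i}(D_{i}\times_{X_{1}}\Spec K_{1})$, and since $D_{i}\to X_{1}$ is finite \'etale with $D_{i}$ normal and connected, $D_{i}\times_{X_{1}}\Spec K_{1}=\Spec K(D_{i})$ is a single point lying over $\theta_{i}$. Applying Proposition \ref{decompchange}.3 with $S=X_{1}$, $S'=\Spec K_{1}$, the decomposition group $G'_{d_{i}}$ of this cusp in $\pi_{1}(X_{2,K_{1}},\ast)$ is $G_{d_{i}}\times_{\pi_{1}(X_{1},\ast)}G_{K_{1}}$, where $G_{d_{i}}$ is the decomposition group of $\theta_{i}$ in $\pi_{1}(X_{2},\ast)$; in particular $G'_{d_{i}}$ is the preimage of $G_{d_{i}}$ under the first projection. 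Thus $\mathrm{Im}(s)\subseteq G_{d_{i}}$ gives $\mathrm{Im}(\tilde{s})\subseteq G'_{d_{i}}$, yielding the map on $\mathrm{Sect}^{CD}$; conversely every cuspidal decomposition group of $X_{2,K_{1}}$ is conjugate to some $G'_{d_{i}}$, and if $\mathrm{Im}(\tilde{s})\subseteq G'_{d_{i}}$ then, applying the first projection and surjectivity of $\pi$, one gets $\mathrm{Im}(s)\subseteq G_{d_{i}}$, which yields the map on the complements of the cuspidal subsets.

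The main obstacle I anticipate is bookkeeping of the various conjugacy ambiguities: decomposition groups are only defined up to $\Delta_{2,1}$-conjugacy, sections only up to $\mathrm{Inn}(\Delta_{2,1})$, and one must verify that all three maps are well defined on the quotient sets while using throughout the \emph{same} canonical identification of $\Delta_{2,1}$ inside $\pi_{1}(X_{2},\ast)$ and inside $\pi_{1}(X_{2,K_{1}},\ast)$. The scheme-theoretic input — that the cusps of $X_{2,K_{1}}$ are exactly the generic points of the $D_{i}$, compatibly with Proposition \ref{decompchange}.3 — is routine but is where the argument genuinely uses the hyperbolic-curve structure of $X_{2}\to X_{1}$ rather than just group theory.
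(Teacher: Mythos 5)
Your proposal is correct and follows essentially the same route as the paper's own (much terser) proof: the fiber-product description of $\pi_{1}(X_{2,K_{1}},\ast)$ from Proposition \ref{fundamental}.2 gives the map, surjectivity of $G_{K_{1}}\to\pi_{1}(X_{1},\ast)$ gives injectivity, and Proposition \ref{decompchange} identifies the cuspidal decomposition groups on both sides. You merely fill in details the paper leaves implicit (the explicit formula $\tilde{s}=(s\circ\pi,\mathrm{id})$, the descent to $\mathrm{Inn}(\Delta_{2,1})$-classes, and the observation that $G'_{d_{i}}$ is the preimage of $G_{d_{i}}$ under the first projection).
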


\begin{proof}
Since the group $\Pi_{X_{2,K_{1}}}$ is isomorphic to the group $\Pi_{X_{2}}\times_{\Pi_{X_{1}}}G_{K_{1}}$ by \cite{Ho} Proposition 2.4 (ii), we obtain a natural map
$$\mathrm{Sect}_{\Pi_{X_{1}}}(\Pi_{X_{2}})/\mathrm{Inn}\,(\Delta_{2,1})
\rightarrow \mathrm{Sect}_{G_{K_{1}}}(\Pi_{X_{2,K_{1}}})/\mathrm{Inn}\,(\Delta_{2,1}).$$
The injectivity of this map follows from the surjectivity of the homomorphism $G_{K_{1}}\rightarrow \Pi_{X_{1}}$.

Let $s_{X}: \Pi_{X_{1}} \to \Pi_{X_{2}}$ be a section of the homomorphism $\Pi_{X_{2}}\to \Pi_{X_{1}}$ and $\theta$ an element of $\{\theta_{i}\mid 1\leq i \leq n \}$.
Write $G_{d}^{XK}$ for a decomposition group of $\theta$ in $\Pi_{X_{2},K_{1}}$ and $G_{d}^{X}$ for the image of $G_{d}^{XK}$ in $\Pi_{X_{2}}$.
Note that $G_{d}^{X}$ coincides with a decomposition group of $\theta$ in $\Pi_{X_{2}}$.
Write $s_{XK}$ for the section of the homomorphism $\Pi_{X_{2,K_{1}}}\to G_{K_{1}}$ determined by $s_{X}$.
It suffices to show that the image of the homomorphism $s_{XK}$ is contained in $G_{d}^{XK}$ if and only if the image of the homomorphism $s_{X}$ is contained in $G_{d}^{X}$.
This follows from Proposition \ref{decompchange}.3.
Hence, we finish the proof of Lemma \ref{naturalmap}.
\end{proof}

\begin{thm}
Let $X_{2}\to X_{1} \to \Spec K$ be a hyperbolic polycurve of dimension $2$  over $K$.
Suppose that the Grothendieck section conjecture holds for the hyperbolic curve $X_{2,K_{1}}\to \Spec K_{1}$.
Then the natural map
\begin{equation}
\mathrm{Sect}_{X_{1}}(X_{2}) \to
(\mathrm{Sect}_{\Pi_{X_{1}}}(\Pi_{X_{2}}))/\mathrm{Inn}\,(\Delta_{2,1})
\label{curvedarrow}
\end{equation}
factors through
\begin{equation}
\mathrm{Sect}_{X_{1}}(X_{2}) \to (\mathrm{Sect}_{\Pi_{X_{1}}}(\Pi_{X_{2}})\setminus\mathrm{Sect}^{CD}_{\Pi_{X_{1}}}(\Pi_{X_{2}}))/\mathrm{Inn}\,(\Delta_{2,1})
\label{isomarrow}
\end{equation}
and the homomorphism (\ref{isomarrow}) is bijective.
\label{fromsection}
\end{thm}

\begin{proof}
Consider the following diagram:
\begin{equation*}
\xymatrix{
\mathrm{Sect}_{X_{1}}(X_{2}) \ar@{.>}[r] \ar@{_{(}->}[d]\ar@/^13pt/[rr]
&\cS(\Pi\setminus CD)\ar@{_{(}->}[d]\ar@{^{(}->}[r]
&\cS(\Pi)\ar@{_{(}->}[d]\\
X_{2, K_{1}}(K_{1})\ar[r] 
&\cS(G\setminus CD)\ar@{^{(}->}[r]
&\cS(G),
}
\end{equation*}
where we write $\cS(\Pi\setminus CD)$ (resp.\,$\cS(\Pi)$; $\cS(G\setminus CD)$; $\cS(G)$) for the set $(\mathrm{Sect}_{\Pi_{X_{1}}}(\Pi_{X_{2}})\setminus\mathrm{Sect}^{CD}_{\Pi_{X_{1}}}(\Pi_{X_{2}}))/\mathrm{Inn}\,(\Delta_{2,1})$ (resp.\,$(\mathrm{Sect}_{\Pi_{X_{1}}}(\Pi_{X_{2}}))/\mathrm{Inn}\,(\Delta_{2,1})$; $(\mathrm{Sect}_{G_{K_{1}}}(\Pi_{X_{2,K_{1}}})\setminus\mathrm{Sect}^{CD}_{G_{K_{1}}}(\Pi_{X_{2,K_{1}}}))/\mathrm{Inn}\,(\Delta_{2,1})$; $(\mathrm{Sect}_{G_{K_{1}}}(\Pi_{X_{2,K_{1}}}))/\mathrm{Inn}\,(\Delta_{2,1})$).
The right rectangle is discussed in Lemma \ref{naturalmap}.
The first vertical arrow is induced by base change, and hence injective.
The curved arrow in the first horizontal line is (\ref{curvedarrow}) and the biggest rectangle of the diagram is commutative.
The left homomorphism of the second horizontal line is bijective by the assumption of Theorem \ref{fromsection}.
By using these discussion and Lemma \ref{naturalmap}, (\ref{isomarrow}) is induced and injective.
Moreover, each element of
$$(\mathrm{Sect}_{\Pi_{X_{1}}}(\Pi_{X_{2}})\setminus\mathrm{Sect}^{CD}_{\Pi_{X_{1}}}(\Pi_{X_{2}}))/\mathrm{Inn}\,(\Delta_{2,1})$$
is defined by a section of the morphism $X_{2}\rightarrow X_{1}$ by \cite{Ho} Lemma 2.10 and the surjectivity of the first homomorphism of the second horizontal line.
Hence, we finish the proof of Theorem \ref{fromsection}.
\end{proof}

\begin{cor}
\label{propercase}
Suppose that the morphism $X_{2}\to X_{1}$ is proper and the Grothendieck section conjecture holds for the hyperbolic curve $X_{2,K_{1}}\to \Spec K_{1}$.
Then the map $\mathrm{Sect}_{X_{1}}(X_{2}) \to \mathrm{Sect}_{\Pi_{X_{1}}}(\Pi_{X_{2}})/\mathrm{Inn}\,(\Delta_{2,1})$ is bijective.
\end{cor}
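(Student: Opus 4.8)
The plan is to obtain this as an immediate consequence of Proposition \ref{fromsection}, the point being that properness of $X_{2}\to X_{1}$ kills every cuspidal term. First I would record that when $X_{2}\to X_{1}$ is proper the divisor of cusps $D$ of the hyperbolic curve $X_{2}\to X_{1}$ is empty: writing $(\overline{X}_{2},D)$ for its smooth compactification as in Remark \ref{cptunique}, the open immersion $X_{2}=\overline{X}_{2}\setminus D\hookrightarrow\overline{X}_{2}$ is then also proper, hence a closed immersion, so $X_{2}$ is open and closed in $\overline{X}_{2}$; since $\overline{X}_{2}\to X_{1}$ has connected fibres this forces $D=\emptyset$. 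Consequently the decomposition $D=\amalg_{1\leq i\leq n}D_{i}$ has empty index set, there are no generic points $\theta_{i}$, and therefore both $\mathrm{Sect}^{CD}_{\pi_{1}(X,\ast)}(\pi_{1}(X_{2},\ast))$ and $\mathrm{Sect}^{CD}_{G_{K_{1}}}(\pi_{1}(X_{2,K_{1}},\ast))$ are empty.

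Next I would note that, for the same reason, the base change $X_{2,K_{1}}\to\Spec K_{1}$ is a proper hyperbolic curve, so the set $\mathrm{Sect}_{G_{K_{1}}}^{CD}(\pi_{1}(X_{2,K_{1}},\ast))$ occurring in Conjecture \ref{section conjecture}.2 is empty and the map (\ref{section}) for $X_{2,K_{1}}$ coincides with the map (\ref{sectionconj}); thus ``the Grothendieck Section Conjecture holds for $X_{2,K_{1}}$'' is an unambiguous hypothesis, and it is exactly the input required by Proposition \ref{fromsection}.

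Finally, substituting $\mathrm{Sect}^{CD}_{\pi_{1}(X,\ast)}(\pi_{1}(X_{2},\ast))=\emptyset$ into the conclusion of Proposition \ref{fromsection}, the difference $\mathrm{Sect}_{\pi_{1}(X_{1},\ast)}(\pi_{1}(X_{2},\ast))\setminus\mathrm{Sect}^{CD}_{\pi_{1}(X,\ast)}(\pi_{1}(X_{2},\ast))$ collapses to $\mathrm{Sect}_{\pi_{1}(X_{1},\ast)}(\pi_{1}(X_{2},\ast))$, and Proposition \ref{fromsection} then gives precisely the bijectivity of $\mathrm{Sect}_{X_{1}}(X_{2})\to\mathrm{Sect}_{\pi_{1}(X_{1},\ast)}(\pi_{1}(X_{2},\ast))/\mathrm{Inn}\,(\Delta_{2,1})$. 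There is no genuine obstacle here; the only step worth pausing over is the bookkeeping check that the two formulations of the Grothendieck Section Conjecture in Conjecture \ref{section conjecture} agree for a proper curve, which we have just carried out.
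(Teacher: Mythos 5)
Your proposal is correct and matches the paper's (implicit) reasoning: the corollary is stated without proof precisely because it is the special case of Proposition \ref{fromsection} in which properness of $X_{2}\to X_{1}$ forces the divisor of cusps to be empty, so that both sets $\mathrm{Sect}^{CD}$ vanish and the set difference collapses. Your verification that $D=\emptyset$ and that the two formulations of the Section Conjecture agree for a proper curve is exactly the bookkeeping the paper leaves to the reader.
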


\begin{proof}
Since the morphism $X_{2}\to X_{1}$ is proper, we have $\mathrm{Sect}^{CD}_{\Pi_{X_{1}}}(\Pi_{X_{2}})=\emptyset$.
Therefore, Corollary \ref{propercase} follows from Theorem \ref{fromsection}.
\end{proof}

\begin{thm}
Suppose that $K$ is a sub-$p$-adic field.
Let $Y$ be a normal variety over $K$.
Suppose that the Grothendieck section conjecture holds for every hyperbolic curve over a field which is finitely generated over $K$ of transcendental degree $1$ (cf.\,Remark \ref{trdeg1}).
Then, for any outer open homomorphism $\phi\in\mathrm{Hom}_{G_{K}}^{\mathrm{open}}(\Pi_{Y}, \Pi_{X_{2}})/\mathrm{Inn}(\Delta_{X})$, there exists a nonconstant morphism $Y \rightarrow X$ inducing $\phi$.
\label{assumesection}
\end{thm}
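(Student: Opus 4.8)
The plan is to reduce, via Theorem \ref{mainthm}, to the dichotomy between property (O) and property (S), and then to \emph{rule out} property (S) using the Grothendieck Section Conjecture together with Lemma \ref{lucky}. First I would set up the standard reductions: by Proposition \ref{etalecovering} and Proposition \ref{fundamental}.2 we may replace $Y$ by a finite \'etale covering and $X_2$ by the corresponding covering, so that $\phi$ becomes surjective; then $\phi_1: \Pi_Y \to \Pi_{X_1}$ is surjective and, by Proposition \ref{1hom} (recall $K$ sub-$p$-adic), is induced by a dominant $K$-morphism $f_1: Y \to X_1$. We are now exactly in the situation of Section \ref{main}, with $X_1, X_2, Y, \phi, \phi_1, f_1$ as there, so Theorem \ref{mainthm} applies: either $\phi$ has property (O) — in which case $\phi$ is induced by a dominant (in particular nonconstant) $K$-morphism $Y \to X_2$ and we are done — or $\phi$ has property (S), and this is the case we must handle.

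So suppose $\phi$ satisfies (S): there are a finite extension $K'/K$, a connected finite \'etale covering $Z \to Y$, a finite dominant morphism $C \to X_1$ from a hyperbolic curve over $K'$, and an outer open homomorphism $\phi'_1: \Pi_Z \to \Pi_C$ over $G_K$, such that the induced outer homomorphism $\Pi_Z \to \Pi_{X_2}\times_{\Pi_{X_1}}\Pi_C$ determines a section of $\Pi_{X_2}\times_{\Pi_{X_1}}\Pi_C \to \Pi_C$. By Proposition \ref{1hom} again, $\phi'_1$ is induced by a dominant $K'$-morphism $g: Z \to C$. Now I would pass to generic points: let $L = K(C)$, a finitely generated extension of $K$ of transcendence degree $1$, so that $\Pi_{X_2}\times_{\Pi_{X_1}}\Pi_C$ restricted over the generic point of $C$ is $\pi_1$ of the hyperbolic curve $X_{2,C}\times_C \Spec L$ over $\Spec L$ (using Proposition \ref{fundamental}.2), and the section coming from (S) restricts to a section in $\mathrm{Sect}_{G_L}(\pi_1(\,\cdot\,))$. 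By hypothesis the Grothendieck Section Conjecture holds for hyperbolic curves over $L$; combined with Lemma \ref{lucky} (which says the cuspidal sections are disjoint from the open-image homomorphisms) the relevant section is \emph{not} cuspidal, so Proposition \ref{fromsection} produces an honest section $s: C \to X_2\times_{X_1}C$ of $X_2\times_{X_1}C \to C$ over the generic point, and then Lemma \ref{valuative} (applied with $Y$ replaced by $Z$, using that $Z$ is regular hence normal) extends it to a $C$-morphism $Z \to X_2\times_{X_1}C$ inducing $\phi'$. Composing $Z \to X_2\times_{X_1}C \to X_2$ with the structure, this is a $K$-morphism $Z \to X_2$ inducing $\phi|_{\Pi_Z}$; its image has dimension $1$ (it factors through the section, so maps onto a copy of $C$ inside $X_2$, dominating $X_1$), hence it is nonconstant.

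It remains to descend from $Z$ to $Y$: we have a nonconstant $K$-morphism $f_Z: Z \to X_2$ whose induced outer homomorphism on $\Pi_Z$ is the restriction of $\phi$. Here I would invoke Proposition \ref{2hom}.1 (injectivity of $\mathrm{Mor}_K(Y,X_2) \to \mathrm{Hom}_{G_K}(\Pi_Y,\Pi_{X_2})/\Pi_{X_{2,\overline K}}$, valid in arbitrary dimension) together with a standard Galois-descent argument for the finite \'etale $Z \to Y$: the $\Gal$-conjugates of $f_Z$ all induce the same outer homomorphism $\phi|_{\Pi_Z}$, so by injectivity they coincide, and $f_Z$ descends to a morphism $Y \to X_2$ inducing $\phi$; this descended morphism is again nonconstant since $f_Z$ was. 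The main obstacle, and the step deserving the most care, is the passage to generic points and the verification that the section produced by (S) genuinely lies in $\mathrm{Sect}_{G_L}\setminus \mathrm{Sect}^{CD}_{G_L}$ so that Proposition \ref{fromsection} applies — this is precisely where Lemma \ref{lucky} is essential, since an outer \emph{open} homomorphism can a priori have image in a cuspidal decomposition group of the curve $X_2 \to X_1$ only if (S) fails to be genuinely about a section, and ruling this out cleanly is the delicate point; a secondary subtlety is keeping track of base points and $G_K$-equivariance through the base change to $K'$ and to $L$, which the remark after Theorem \ref{intromainthm} and the outer-homomorphism formalism of Section 1 are designed to absorb.
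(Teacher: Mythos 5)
Your reduction to the dichotomy of Theorem \ref{mainthm}, and your treatment of the (S) case up to the production of a nonconstant $K$-morphism $f_Z\colon Z\to X_2$ inducing $\phi|_{\Pi_Z}$ (non-cuspidality via Lemma \ref{lucky}, then Proposition \ref{fromsection} and Lemma \ref{valuative}), track the first half of the paper's argument. The genuine gap is the final descent from $Z$ to $Y$. For $\sigma$ a deck transformation of $Z$ over $Y$ with lift $\gamma\in\Pi_Y$, one computes $(f_Z\circ\sigma)_*=\mathrm{conj}_{\phi(\gamma)}\circ(f_Z)_*$; the conjugating element $\phi(\gamma)$ lies in $\Pi_{X_2}$ but not, in general, in $\Pi_{X_{2,\overline{K}}}$ (it does only when $\gamma$ can be chosen in $\Pi_{Y_{\overline{K}}}$, i.e.\ when $\sigma$ is a geometric deck transformation, which fails as soon as $Z$ involves the constant field extension $K'/K$ built into property (S)). Hence $f_Z\circ\sigma$ and $f_Z$ need not define the same class in $\mathrm{Hom}_{G_K}(\Pi_Z,\Pi_{X_2})/\mathrm{Inn}\,\Pi_{X_{2,\overline{K}}}$, and the injectivity of Proposition \ref{2hom}.1 cannot be invoked to conclude $f_Z\circ\sigma=f_Z$. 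Even granting descent to some $f\colon Y\to X_2$, you would still owe an argument that $f_*=\phi$ on all of $\Pi_Y$ rather than only on the open subgroup $\Pi_Z$ (this needs, e.g., triviality of centralizers of open subgroups of $\Pi_{X_2}$, which you do not address); and $Z\to Y$ is only a connected finite \'etale covering, not Galois, so the descent datum is not even in place.

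The paper avoids descent along a cover entirely. Working with the specific $Y'$, $X'_1$ of Proposition \ref{replace}, it restricts to an open $U\subset U_{Y/X_1}$ over which $X'_1\to U_{Y/X_1}$ is finite \'etale of degree $d$ and proves, by a pure index count ($\Pi_{U'}$ embeds into $\mathrm{Im}\,\phi_U$ with index $\le d$ while $\Pi_{U'}\subset\Pi_U$ has index exactly $d$), that $\mathrm{Im}\,\phi_U\to\Pi_U$ is an isomorphism; this transfers the section property from the cover $Y'_U$ down to the open subscheme $Y_U$ of $Y$ itself. It then applies the Section Conjecture a \emph{second} time, now over $U$, to realize $\mathrm{Im}\,\phi_U$ by a scheme section $U\to X_2\times_{X_1}U$, and finally extends $Y_U\to U\to X_2\times_{X_1}U\to X_2$ across all of $Y$ via Proposition \ref{openreduction} and Lemma \ref{valuative}, which is unproblematic because $\Pi_{Y_U}\to\Pi_Y$ is surjective. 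This index computation and the second application of the Section Conjecture are precisely the ideas your proposal is missing, and they are what make the passage from the auxiliary cover back to $Y$ work.
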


\begin{proof}
Write $\phi_{1}$ for the composite outer homomorphism
$$\Pi_{Y}\overset{\phi}{\to} \Pi_{X_{2}}\to  \Pi_{X_{1}}.$$
Then the outer homomorphism $\phi_{1}$ is induced by a unique dominant $K$-morphism $f_{1}: Y \to X_{1}$ by \cite{Ho}\,Theorem 3.3.
Write $K'_{1}$ for the normalization of $K_{1}$ in the function field of $Y$, $\eta$ for the scheme $\Spec\,K'_{1}$, $X'_{1}$ for the open subscheme of the normalization of $X_{1}$ in $K'_{1}$ determined by the image of $Y$, $Y_{\eta}$ for the scheme $Y\times_{X'_{1}}\eta$, and $G_{K'_{1}}$ for the \'etale fundamental group of $\eta$ (, which is isomorphic to the absolute Galois group of $K'_{1}$).
Then we have the following commutative diagram of profinite groups:
$$
\xymatrix{
\mathrm{Ker}\,(\Pi_{Y_{\eta}}\to G_{K'_{1}})\ar[r]\ar@{.>}[dd]&\Pi_{Y_{\eta}}\ar@{.>}[rd]\ar[d]\ar[rr]&&G_{K'_{1}}\ar@{=}[d]\\
&\Pi_{Y}\ar[rd]_{\phi}&\Pi_{X_{2}}\times_{\Pi_{X_{1}}}G_{K'_{1}}\ar[r]\ar[d]&G_{K'_{1}}\ar[d]\\
\Delta_{2,1}\ar[rr]&&\Pi_{X_{2}}\ar[r]&\Pi_{X_{1}}.
}
$$
If the image of the induced outer homomorphism
\begin{equation}
\mathrm{Ker}\,(\Pi_{Y_{\eta}}\to G_{K'_{1}})\to \Delta_{2,1}
\label{repair}
\end{equation}
is nontrivial, $\phi$ arises from a morphism $Y\to X_{2}$ over $K$ by \cite{Ho} Lemma 3.4 (iv).
Suppose that the outer homomorphism (\ref{repair}) is trivial.
Note that we have natural isomorphisms $\Pi_{X_{2}\times_{X_{1}}\eta}\cong \Pi_{X_{2}}\times_{\Pi_{X_{1}}}G_{K'_{1}}$ and $\mathrm{Ker}(\Pi_{X_{2}\times_{X_{1}}\eta}\to G_{K'_{1}})\simeq\Delta_{2,1}$ by [2] Proposition 2.4 (ii) and the outer homomorphism $\Pi_{Y_{\eta}}\to G_{K'_{1}}$ is surjective.
Hence, the image of the induced outer homomorphism $\Pi_{Y_{\eta}}\to \Pi_{X_{2}\times_{X_{1}}\eta}$ defines a section $s$ of the outer homomorphism $\Pi_{X_{2}\times_{X_{1}}\eta} \to G_{K'_{1}}$.
Suppose that
$$s\in \mathrm{Sect}^{CD}_{G_{K'_{1}}}(\Pi_{X_{2}\times_{X_{1}}\eta})/\mathrm{Inn}(\Delta_{2,1}).$$
Then the group $\mathrm{Im}(\Pi_{Y_{\eta}}\to \Pi_{X_{2}})$ is not open in $\Pi_{X_{2}}$ by Proposition \ref{decompchange}.2 and 3.
Since the outer homomorphism $\Pi_{Y_{\eta}}\to \Pi_{Y}$ is surjective, the image of $\phi$ coincides with $\mathrm{Im}(\Pi_{Y_{\eta}}\to \Pi_{X_{2}})$.
Therefore, the image of $\phi$ is not open, which contradicts the assumption on $\phi$.
By the Grothendieck section conjecture for the hyperbolic curve $X_{2}\times_{X_{1}}\eta \to \eta$, we have a $K'_{1}$-morphism $Y_{\eta}\to X_{2}\times_{X_{1}}\eta$ inducing the outer homomorphism $\Pi_{Y_{\eta}} \to  \Pi_{X_{2}\times_{X_{1}}\eta}$.
Then by \cite{Ho} Lemma 2.10, there exists a $K$-morphism $Y\to X_{2}$ inducing $\phi$ such that the composite morphism $Y_{\eta}\to Y\to X_{2}$ coincides with the composite morphism $Y_{\eta}\to X_{2}\times_{X_{1}}\eta \to X_{2}$.
\end{proof}

\begin{rem}
Let $Y$ be as in Theorem \ref{assumesection}.
Suppose that the Grothendieck section conjecture holds for every hyperbolic curve over a field which is finitely generated over $K$ of transcendental degree $\mathrm{dim}\,Y$ and the morphism $X_{2}\to X_{1}$ is proper.
Write $\eta$ (resp.\,$G_{\eta}$) for the spectrum (resp.\,the absolute Galois group) of the function field of $Y$.
Then we have a diagram of profinite groups
$$
\xymatrix{
G_{\eta} \ar[d] \ar@{.>}[dr]_{\phi_{\eta}} \ar@{=}[drr] & & \\
\Pi_{Y} \ar[rd]_{\phi}
&\Pi_{X_{2}\times_{X_{1}}\eta} \ar[r] \ar[d]
&G_{\eta}\ar[d]\\
&\Pi_{X_{2}} \ar[r] 
&\Pi_{X_{1}},
}
$$
where $\phi_{\eta}$ is the outer homomorphism induced by using the isomorphism
$$\Pi_{X_{2}\times_{X_{1}}\eta} \cong \Pi_{X_{2}}\times_{\Pi_{X_{1}}}G_{\eta}.$$
By Grothendieck section conjecture and \cite{Ho} Lemma 2.10, we can prove that $\phi$ is induced by a $K$-morphism $Y \to X_{2}$.
Then we can show Theorem \ref{assumesection} without using the assumption that $\phi$ is open.
\label{trdeg1}
\end{rem}

\label{mainsection}

\section{Examples of hyperbolic polycurves}
In this section, we give examples of hyperbolic polycurves which show that the anabelianity of hyperbolic polycurves is weaker than that of hyperbolic curves in some sense.

As we write in Section \ref{Intro}, Mochizuki proved the Hom version of the pro-$p$ Grothendieck conjecture for hyperbolic curves over sub-$p$-adic fields (cf.\,\cite{Moch}).
Moreover, Sawada proved a pro-$p$ analogue of \cite{Ho} Theorem A under a certain assumption on the \'etale fundamental groups of hyperbolic polycurves (cf.\,\cite{Saw}).
We construct examples which show that the Isom version of the pro-$p$ Grothendieck conjecture for hyperbolic polucurves over sub-$p$-adic fields does not hold in general in this section.

Let $K$ be a field of characteristic $0$, $\overline{K}$ an algebraic closure of $K$, and $p$ a prime number.

\begin{note-dfn}
\begin{enumerate}
\item Let $G$ be a profinite group.
We write $G^{p}$ for the maximal pro-$p$ quotient of $G$ (i.e., the inverse limit of the inverse system consisting of the quotient groups of $G$ by open normal subgroups such that the orders of the quotient groups are powers of $p$).
\item For any variety $X$ over $K$, we write $\Pi_{X}$ (resp.\,$\Delta_{X}$; $\Pi_{X}^{(p)}$) for the \'etale fundamental group of $X$ (resp.\,the \'etale fundamental group of $X\times_{\Spec K}\Spec \overline{K}$; the quotient group $\Pi_{X}/\mathrm{Ker} (\Delta_{X}\to \Delta_{X}^{p})$) in this section.
\end{enumerate}
\end{note-dfn}

First, we prove an elementary lemma.

\begin{lem}
Let
$$1 \rightarrow N \rightarrow G \rightarrow H \rightarrow 1$$
be an exact sequence of profinite groups.
\begin{enumerate}
\item We have an exact sequence
$$(N/[N,\mathrm{Ker}(G \rightarrow G^{p})])^{p} \rightarrow G^{p} \rightarrow H^{p} \rightarrow 1.$$
Here, ``$[-,-]$" denotes the topological closure of the commutator subgroup.
\item Suppose that we have a section $s: H \rightarrow G$ of the homomorphism $G \rightarrow H$ and write $N_{\mathrm{Ker}(H \rightarrow H^{p})}$ for the maximal quotient group of $N$ on which $\mathrm{Ker}(H \rightarrow H^{p})$ acts trivially.
Then we have an exact sequence
$$(N_{\mathrm{Ker}(H \rightarrow H^{p})})^{p} \rightarrow G^{p} \rightarrow H^{p} \rightarrow 1.$$
\end{enumerate}
\label{pro-l-exact}
\end{lem}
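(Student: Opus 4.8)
The plan is to deduce both assertions from a single computation: an explicit identification of $\mathrm{Ker}(G^{p}\to H^{p})$ with the image of $N$ in $G^{p}$. Throughout I would abbreviate $R_{G}=\mathrm{Ker}(G\to G^{p})$ and $R_{H}=\mathrm{Ker}(H\to H^{p})$, and write $q\colon G\to H$ for the given surjection. The only external facts needed are the universal property of the maximal pro-$p$ quotient (any continuous homomorphism from a profinite group to a pro-$p$ group factors through that group's maximal pro-$p$ quotient), that a closed subgroup of a pro-$p$ group is pro-$p$, and the characterization of $R_{H}$ as the smallest closed normal subgroup of $H$ with pro-$p$ quotient.

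For assertion 1: since $H^{p}$ is pro-$p$, the surjection $G\to H\to H^{p}$ factors through a surjection $\psi\colon G^{p}\to H^{p}$, and I would identify $\mathrm{Ker}\,\psi$ with the image of $N$ in $G^{p}$. First, $R_{G}\subseteq q^{-1}(R_{H})$ because the composite $G\to G^{p}\to H^{p}$ (which equals $G\to H\to H^{p}$) kills $R_{G}$; applying $q$ gives $q(R_{G})\subseteq R_{H}$. Conversely $G/NR_{G}\cong H/q(R_{G})$ is a quotient of the pro-$p$ group $G^{p}=G/R_{G}$, hence pro-$p$, so $q(R_{G})\supseteq R_{H}$ by minimality of $R_{H}$; thus $q(R_{G})=R_{H}$ and $q^{-1}(R_{H})=NR_{G}$. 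Pushing forward along $G\to G^{p}$ then yields $\mathrm{Ker}\,\psi=N/(N\cap R_{G})$. Since $N$ and $R_{G}$ are both normal in $G$ we have $[N,R_{G}]\subseteq N\cap R_{G}$, so this image is a quotient of $N/[N,R_{G}]$; being a closed subgroup of $G^{p}$ it is pro-$p$, hence a quotient of $(N/[N,R_{G}])^{p}$. Combining this with surjectivity of $\psi$ produces the exact sequence of assertion 1.

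For assertion 2: a section $s\colon H\to G$ gives, via the universal property again, a factorization of $H\xrightarrow{s}G\to G^{p}$ through $H^{p}$; in particular the image of $s(R_{H})$ in $G^{p}$ is trivial. Hence for $n\in N$ and $h\in R_{H}$ the commutator $n^{-1}s(h)ns(h)^{-1}$, which lies in $N$, maps to $1$ in $G^{p}$, i.e.\ lies in $N\cap R_{G}$. Therefore the surjection $N\to N/(N\cap R_{G})=\mathrm{Ker}\,\psi$ from the previous paragraph factors through the maximal quotient $N_{\mathrm{Ker}(H\to H^{p})}$ of $N$ on which $\mathrm{Ker}(H\to H^{p})$ acts trivially (via $s$), and then, since $\mathrm{Ker}\,\psi$ is pro-$p$, through $(N_{\mathrm{Ker}(H\to H^{p})})^{p}$. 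This gives the refined exact sequence.

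I do not anticipate a genuine obstacle; the argument is elementary profinite-group bookkeeping. The points that most need care are the repeated invocations of the universal property of maximal pro-$p$ quotients (it is precisely what makes $\psi$ and the factorization of $s$ exist), remembering to take closures of commutator subgroups since we work in the profinite category, and the step $q^{-1}(R_{H})=NR_{G}$, where one must argue both inclusions carefully.
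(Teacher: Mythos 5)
Your argument is correct and follows essentially the same route as the paper: both rest on the observations that $[N,\mathrm{Ker}(G\to G^{p})]$ dies in $G^{p}$ and that $s(\mathrm{Ker}(H\to H^{p}))\subseteq\mathrm{Ker}(G\to G^{p})$. The only difference is that you spell out the right-exactness step $\mathrm{Ker}(G^{p}\to H^{p})=NR_{G}/R_{G}$ explicitly, which the paper's two-line proof takes for granted.
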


\begin{proof}
\begin{enumerate}
\item
Since the image of $[N,\mathrm{Ker}(G \rightarrow G^{p})]$ in $G^{p}$ is trivial, we obtain an exact sequence
$$N/[N,\mathrm{Ker}(G \rightarrow G^{p})] \rightarrow G^{p} \rightarrow H^{p} \rightarrow 1$$
and hence also an exact sequence
$$(N/[N,\mathrm{Ker}(G \rightarrow G^{p})])^{p} \rightarrow G^{p} \rightarrow H^{p} \rightarrow 1.$$
\item Since we have $s(\mathrm{Ker}(H \rightarrow H^{p})) \subset \mathrm{Ker}(G \rightarrow G^{p})$, the assertion follows from 1.
\end{enumerate}
\end{proof}

We show a lemma for Example \ref{not-inj}.

\begin{lem}
Suppose that $p\neq 2$.
Let $H$ be a hyperelliptic curve over $K$ and $\iota$ the hyperelliptic involution of $H$.
Suppose that there exist $K$-rational points $h$, $h'$ of $H$ which are fixed by the action of $\iota$.
By considering a geometric point over the fixed point $h$, we obtain actions of $\iota$ on $\Delta_{H\setminus\{ h' \}}$ and $\Delta_{H}$. 
Then we have
$(\Delta_{H})_{\langle \iota \rangle}^{p} = \{ 1 \}$ and $(\Delta_{H\setminus\{ h' \}})_{\langle \iota \rangle}^{p} = \{ 1 \}.$
\label{hyperelliptic}
\end{lem}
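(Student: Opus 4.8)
\emph{Proof proposal.} The plan is to reduce both statements to a computation on abelianizations. Since a pro-$p$ group with trivial topological abelianization is itself trivial (Frattini/topological Nakayama argument), it suffices to show that the maximal abelian pro-$p$ quotients of $(\Delta_{H})_{\langle\iota\rangle}$ and of $(\Delta_{H\setminus\{h'\}})_{\langle\iota\rangle}$ vanish. For a profinite group $G$ equipped with an action of $\langle\iota\rangle$, passing to $\langle\iota\rangle$-coinvariants, to abelianization, and to the maximal pro-$p$ quotient are each given by the maximal quotient satisfying a universal property, and hence commute with one another; thus the maximal abelian pro-$p$ quotient of $G_{\langle\iota\rangle}$ is canonically $((G^{\mathrm{ab}})_{\langle\iota\rangle})^{p}$. (Alternatively, one may package this reduction by applying Lemma \ref{pro-l-exact}.2 to the split extension $1\to\Delta\to\Delta\rtimes\langle\iota\rangle\to\langle\iota\rangle\to1$, using $p\neq2$ so that $\langle\iota\rangle^{p}=\{1\}$.) So I am reduced to computing $((\Delta_{H}^{\mathrm{ab}})_{\langle\iota\rangle})^{p}$ and $((\Delta_{H\setminus\{h'\}}^{\mathrm{ab}})_{\langle\iota\rangle})^{p}$.

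The key geometric input is that $\iota$ acts as $-\mathrm{id}$ on $\Delta_{H}^{\mathrm{ab}}$. Write $g\geq2$ for the genus of $H$, so $\Delta_{H}^{\mathrm{ab}}\cong\ZZ^{2g}$, which we identify with $\prod_{\ell}T_{\ell}J$ for $J$ the Jacobian of $H_{\overline{K}}$. The hyperelliptic map $\pi\colon H\to H/\langle\iota\rangle\cong\mathbb{P}^{1}$ gives $\pi^{*}\circ\pi_{*}=1+\iota^{*}$ as an endomorphism of $J$, and this factors through $J(\mathbb{P}^{1})=0$; hence $\iota^{*}=-1$ on $J$, so $\iota^{*}=-\mathrm{id}$ on each $T_{\ell}J$ and on $\Delta_{H}^{\mathrm{ab}}$. (This is the classical fact that the hyperelliptic involution acts by $-1$ on $H_{1}$.) Here it is used that $h$ is a $K$-rational point fixed by $\iota$, so that $\iota$ acts as an honest automorphism of $\Delta_{H}$ fixing the chosen geometric base point, not merely as an outer one.

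For the affine curve $H\setminus\{h'\}$, note that $\iota$ preserves it since $\iota(h')=h'$, and the open immersion $H\setminus\{h'\}\hookrightarrow H$ induces an $\iota$-equivariant surjection $\Delta_{H\setminus\{h'\}}^{\mathrm{ab}}\twoheadrightarrow\Delta_{H}^{\mathrm{ab}}$ (surjectivity of $\pi_{1}$ as in Lemma \ref{openimage}). Since $\Delta_{H\setminus\{h'\}}$ is the profinite completion of a free group of rank $2g$ and the loop around the single deleted point is a product of commutators, hence trivial in $H_{1}$, this surjection is an isomorphism $\ZZ^{2g}\xrightarrow{\ \sim\ }\ZZ^{2g}$. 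Therefore $\iota$ also acts by $-\mathrm{id}$ on $\Delta_{H\setminus\{h'\}}^{\mathrm{ab}}$. Finally, the $\langle\iota\rangle$-coinvariants of $\ZZ^{2g}$ with $\iota=-\mathrm{id}$ are $\ZZ^{2g}/(1-\iota)\ZZ^{2g}=\ZZ^{2g}/2\ZZ^{2g}\cong(\Z/2\Z)^{2g}$, whose maximal pro-$p$ quotient is trivial because $p\neq2$. Combining with the first paragraph yields $(\Delta_{H})_{\langle\iota\rangle}^{p}=\{1\}$ and $(\Delta_{H\setminus\{h'\}})_{\langle\iota\rangle}^{p}=\{1\}$.

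I expect the main obstacle to be the second paragraph: correctly establishing that $\iota$ acts as $-\mathrm{id}$ on the \emph{full} profinite abelianization (rather than only up to bounded ambiguity), which needs the Jacobian/Hodge-theoretic input together with care that the base point is a rational fixed point so the action is honest. The homological bookkeeping for the punctured curve—that deleting one point leaves $H_{1}$ unchanged—and the formal manipulation of coinvariants versus pro-$p$ quotients are routine.
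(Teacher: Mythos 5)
Your proof is correct and follows essentially the same route as the paper's: reduce to the maximal abelian pro-$p$ quotient of the $\langle\iota\rangle$-coinvariants and use that $\iota$ acts there by $-1$, so that the coinvariants are killed by $2$ and hence have trivial pro-$p$ quotient for $p\neq 2$. The only difference is that the paper cites \cite{Ho3} Lemma 1.11 for the identification $\Delta_{H\setminus\{h'\}}^{p,\mathrm{ab}}\cong\Delta_{H}^{p,\mathrm{ab}}$ and the $-1$ action, which you instead derive directly from the Jacobian ($\pi^{*}\pi_{*}=1+\iota^{*}$ factoring through $\mathrm{Pic}^{0}(\mathbb{P}^{1})=0$) and the fact that puncturing at one point does not change $H_{1}$.
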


\begin{proof}
Since the profinite groups $\Delta_{H\setminus\{ h' \}}$ and $\Delta_{H}$ are topologically finitely generated, it suffices to show that $(\Delta_{H})_{\langle \iota \rangle}^{p,\mathrm{ab}} = \{ 1 \}$ and $(\Delta_{H\setminus\{ h' \}})_{\langle \iota \rangle}^{p,\mathrm{ab}} = \{ 1 \}.$
By \cite{Ho3} Lemma 1.11, the action of $\iota$ on the abelian profinite group $(\Delta_{H\setminus\{ h' \}}^{p,\mathrm{ab}}) \cong (\Delta_{H}^{p,\mathrm{ab}})$ is same as the multiplication by $-1$.
Therefore,
$$(\Delta_{H}^{p,\mathrm{ab}})_{\langle \iota \rangle} = \Delta_{H}^{p,\mathrm{ab}}/2\Delta_{H}^{p,\mathrm{ab}} = \{ 1 \}.$$
\end{proof}

\begin{exam}
Suppose that $p\neq 2$ and $K$ is a finite extension field of $\Q_{p}$.
We construct a proper hyperbolic polycurve $Z$ over a field $K$, such that the natural map
$$\mathrm{Isom}_{K}(Z, Z) \rightarrow \mathrm{Isom}_{G_{K}}(\Pi_{Z}^{(p)}, \Pi_{Z}^{(p)})/\mathrm{Inn}(\Delta_{Z}^{p})$$
is not injective.
Here, $\mathrm{Isom}_{K}(Z, Z)$ is the set of automorphisms of $Z$ over $K$, and $\mathrm{Isom}_{G_{K}}(\Pi_{Z}^{(p)}, \Pi_{Z}^{(p)})$ is the set of automorphisms of $\Pi_{Z}^{(p)}$ over $G_{K}$.
This shows that it is impossible to detect an automorphism of a hyperbolic polycurve from the corresponding $G_{K}$-outer automorphism of its pro-$p$ fundamental group.
In particular, the Isom version of the pro-$p$ Grothendieck conjecture, which is true for hyperbolic curves (\cite{Moch}) or hyperbolic polycurves with suitable conditions up to dimension $4$ (\cite{Saw}), cannot be true for general hyperbolic polycurves.

Let $X_{1}$ be a proper hyperbolic curve over $K$, and assume that there exists a homomorphism $\Pi_{X_{1}} \rightarrow \Z/2\Z$ which induces a surjection $ \Delta_{X_{1}} \rightarrow \Z/2\Z$.
We write $X_{1}' \rightarrow X_{1}$ for the \'etale covering space of $X_{1}$ corresponding to $\mathrm{Ker}\,(\Pi_{X_{1}} \rightarrow \Z/2\Z)$ and $\iota_{1}$ for a generator of $\mathrm{Aut}(X_{1}'/X_{1})$. 
Let $X_{2}$ be a hyperbolic curve over $K$ whose automorphism group over $K$ has a subgroup isomorphic to $\Z/2\Z= \langle\iota_{2}\rangle$ such that $X_{2}$ has a fixed point $x_{2}$ under the action of $\Z/2\Z( = \langle\iota_{2}\rangle)$. Moreover, assume that the maximal quotient group $(\Delta_{X_{2}})^{p}_{\Z/2\Z}$ of $(\Delta_{X_{2}})^{p}$ on which $\Z/2\Z$ acts trivially via a geometric point over $x_{2}$ is trivial (cf.\,Lemma \ref{hyperelliptic}).

Consider the action of $\Z/2\Z$ on $X_{2} \times_{\mathrm{Spec}\,K}X_{1}'$ induced by $(\iota_{2},\iota_{1})$.
Write $Z$ for the quotient scheme of $X_{2} \times_{\mathrm{Spec}\,K}X_{1}'$ by this $\Z/2\Z$-action.
By construction, we have a Cartesian diagram
\[
\xymatrix{ 
X_{2} \times_{\mathrm{Spec}\,K}X_{1}' \ar[r] \ar[d]
& X_{1}' \ar[d]\\
Z \ar[r] 
& X_{1}.
}
\]
Since the morphism $X'_{1}\to X_{1}$ is finite etale, $Z \to X_{1}$ is a hyperbolic curve whose geometric generic fiber coincides with that of $X_{2}\times_{\Spec K}X'_{1} \to X'_{1}$.
Hence, we obtain exact sequences of profinite groups
\begin{equation*}
1 \rightarrow \Delta_{X_{2}} \rightarrow \Pi_{Z} \rightarrow \Pi_{X_{1}} \rightarrow1
\end{equation*}
and
\begin{equation*}
1 \rightarrow \Delta_{X_{2}} \rightarrow \Delta_{Z} \rightarrow \Delta_{X_{1}}  \rightarrow1
\end{equation*}
by \cite{Ho} Proposition 2.4 (i).
Since the section $X_{1}' \to X_{2}\times_{\mathrm{Spec}\,K}X_{1}'$ of the morphism $X_{2}\times_{\mathrm{Spec}\,K}X_{1}'\to X_{1}'$ determined by the point $x_{2}$ is compatible with the actions of $\Z/2\Z$, we have a section $X_{1} \to Z$ of the morphism $Z \to X_{1}$ by taking the quotient schemes by $\Z/2\Z$.
Therefore, the homomorphism $\Pi_{Z} \to \Pi_{X_{1}}$ has a section which also determines a section of the homomorphism $\Delta_{Z} \to \Delta_{X_{1}}$.
We calculate the action
\begin{equation}
\Delta_{X_{1}} \rightarrow \mathrm{Aut}(\Delta_{X_{2}})
\label{goal}
\end{equation}
induced by the section.
Write $\psi$ for the composite homomorphism
\begin{equation*}
\begin{split}
\Delta_{X_{1}} &\rightarrow \Delta_{X_{1}}/\Delta_{X_{1}'}\simeq\Pi_{X_{1}}/\Pi_{X_{1}'}\\
& = \langle \iota_{1} \rangle \cong \Z/2\Z \\
& \cong \langle \iota_{2} \rangle \subset \{f \in \mathrm{Aut}(X_{2}/\mathrm{Spec}\,K)\mid f(x_{2})=x_{2} \} \rightarrow \mathrm{Aut}(\Delta_{X_{2}}).
\end{split}
\end{equation*}
By the construction of $Z$, the action (\ref{goal}) coincides with $\psi$.

Since the image of the composite homomorphism
$$\mathrm{Ker}(\Delta_{X_{1}}\rightarrow\Delta_{X_{1}}^{p}) \subset \Delta_{X_{1}} \subset \Pi_{X_{1}} \overset{\phi + \psi}{\to} \mathrm{Aut}(\Delta_{X_{2}})$$
is $\langle \iota_{2} \rangle$ by the assumption $2 \neq p$, the group $\mathrm{Ker}\,(\Delta_{Z}^{p}\to \Delta_{X_{1}}^{p})$ is a quotient group of $(\Delta_{X_{2}})_{\langle \iota_{2} \rangle}^{p}$ by Lemma \ref{pro-l-exact}.2.
Thus, we have
$$\Delta_{Z}^{p} \cong \Delta_{X_{1}}^{p}$$
by the assumption that $(\Delta_{X_{2}})^{p}_{\langle \iota_{2} \rangle}$ is trivial.
Hence, we have
$$\Pi_{Z}^{(p)} \cong \Pi_{X_{1}}^{(p)}.$$

It suffices to show that the scheme $Z$ has a nontrivial automorphism over $X$, since such an automorphism induces the trivial outer automorphism of $\Pi_{Z}^{(p)}(\cong\Pi_{X}^{(p)})$ (over $G_{K}$).
Since the automorphism $(\iota_{2}, \mathrm{id}_{X_{1}'})$ of $X_{2} \times_{\mathrm{Spec}\,K} X_{1}'$ over $X'_{1}$ is compatible with the diagonal action of $\Z/2\Z$, this automorphism defines a nontrivial automorphism of $Z$ over $X_{1}$.

Even if we change $X_{2}$ to another hyperbolic curve satisfying the above condition for $X_{2}$, the geometrically pro-$p$ \'etale fundamental group ($\Pi^{(p)} = \Pi/\mathrm{Ker}(\Delta \rightarrow \Delta^{p})$) of the resulting polycurve is isomorphic to $\Pi_{Z}^{(p)}$ over $G_{K}$, since we have the isomorphism $\Pi_{Z}^{(p)} \cong \Pi_{X_{1}}^{(p)}$.
Therefore this example gives a counterexample to the Isom version of the pro-$p$ Grothendieck conjecture for hyperbolic polycurves.
Since we have the isomorphism $\Delta_{Z}^{p} \cong \Delta_{X_{1}}^{p}$, we cannot even determine the dimension of a hyperbolic polycurve $X$ over $\overline{K}$ from its pro-$p$ \'etale fundamental group $\Delta_{X}^{p}$.
\label{not-inj}
\end{exam}

\begin{exam}
We give another example of non-isomorphic hyperbolic polycurves over a mixed characteristic local field $K$ with residual field of characteristic $p$ and of order $q$, whose geometrically pro-$p$ \'etale fundamental groups are isomorphic over $G_{K}$. This gives another counterexample to the Isom version of the pro-$p$ Grothendieck conjecture for hyperbolic polycurves.

Let $l$ be a prime number such that $l | q-1$.
Let $X_{2}$ be the hyperbolic curve $\mathbb{P}^{1}_{K} \setminus (\{ \infty \} \cup \mu_{l})$ over $K$.
Fix a primitive $l$-th root of unity $\zeta \in \mu_{l}$.
Let $\iota : \mathbb{P}^{1}_{K} \rightarrow \mathbb{P}^{1}_{K}$ be the automorphism $z \mapsto z\zeta$.
The morphism $\iota$ induces a $\Z/l\Z$-action on $X_{2}$ over $K$ which fixes $0 \in X_{2}$.
Let $X_{1}$ be a hyperbolic curve over $K$, and assume that there exists a homomorphism $\Pi_{X_{1}} \rightarrow \Z/l\Z$
which induces a surjection $ \Delta_{X_{1}} \rightarrow \Z/l\Z$.
We can obtain a scheme $Z$ via the construction same as that in Example \ref{not-inj} by replacing $\Z/2\Z$ by $\Z/l\Z$.
Then the fixed point $0 \in X_{2}$ defines a section $X_{1} \rightarrow Z$, which determines sections $\Delta_{X_{1}} \rightarrow \Delta_{Z}$ and $\Pi_{X_{1}} \rightarrow \Pi_{Z}$.
Since $p \neq l$, we obtain an exact sequence
$$(\Delta_{X_{2}})^{p}_{\langle \iota \rangle} \rightarrow \Delta_{Z}^{p} \rightarrow \Delta_{X_{1}}^{p} \rightarrow 1$$
by using the same argument as that in Example \ref{not-inj}.
The group $(\Delta_{X_{2}})^{p,\mathrm{ab}}_{\langle \iota \rangle}$ is generated by $1$ element, which shows that the group $(\Delta_{X_{2}})^{p}_{\langle \iota \rangle}$ is an abelian group.
Therefore, the kernel of the homomorphism $\Delta_{Z}^{p} \rightarrow \Delta_{X_{1}}^{p}$ is a quotient group of $(\Delta_{X_{2}})^{\mathrm{ab}}_{\langle \iota \rangle}$.
Since we have $(\Delta_{X_{2}})^{\mathrm{ab}}_{\langle \iota \rangle} = (\Delta_{X_{2}})^{\mathrm{ab}}_{\Delta_{X_{1}}} = (\Delta_{X_{2}}/[\Delta_{X_{2}},\Delta_{X_{2}}])_{\Delta_{X_{1}}} = \Delta_{X_{2}}/[\Delta_{X_{2}},\Delta_{Z}]$, we obtain the commutative diagram with exact horizontal lines 
\[
\xymatrix{
1 \ar[r]
&\Delta_{X_{2}}   \ar[r] \ar[d]
&\Delta_{Z}  \ar[r] \ar[d]
&\Delta_{X_{1}} \ar[r] \ar[d]
& 1\\
1 \ar[r]
&(\Delta_{X_{2}})^{\mathrm{ab}}_{\langle \iota \rangle} \ar[r] 
&\Delta_{Z}/[\Delta_{X_{2}},\Delta_{Z}]  \ar[r] 
&\Delta_{X_{1}} \ar[r]
&1.
}
\]
The second line of this diagram also splits, and thus we have the decomposition
$$\Delta_{Z}/[\Delta_{X_{2}},\Delta_{Z}] = (\Delta_{X_{2}})^{\mathrm{ab}}_{\langle \iota \rangle} \times\Delta_{X_{1}},$$
and hence the decomposition $(\Delta_{Z}/[\Delta_{X_{2}},\Delta_{Z}])^{p}\cong(\Delta_{X_{2}})^{p,\mathrm{ab}}_{\langle \iota \rangle} \times\Delta_{X_{1}}^{p}$.
Since
$$(\Delta_{X_{2}})^{p}_{\langle \iota \rangle} \cong (\Delta_{X_{2}})^{p,\mathrm{ab}}_{\langle \iota \rangle},$$
we have the isomorphism $\Delta_{Z}^{p} \cong (\Delta_{Z}/[\Delta_{X_{2}},\Delta_{Z}])^{p}$, and therefore we obtain the decomposition $\Delta_{Z}^{p} = (\Delta_{X_{2}})^{p,\mathrm{ab}}_{\langle \iota \rangle} \times\Delta_{X_{1}}^{p}$.
Note that $\Delta_{X_{2}}^{\mathrm{ab}}$ is isomorphic to $\ZZ(1) \otimes_{\Z} (\underset{z \in \mu_{l}}{\oplus} \Z e_{z})$ as a $\Pi_{X_{1}}$-module.
This shows that $\Pi_{Z}^{(p)}(=\Pi_{Z}/\mathrm{Ker}(\Delta_{Z} \rightarrow \Delta_{Z}^{p}))$ is isomorphic to $\Z_{p}(1) \rtimes \Pi_{X_{1}}^{(p)}$, which is defined by the action
$$\Pi_{X_{1}}^{(p)} (=\Pi_{X_{1}}/\mathrm{Ker}(\Delta_{X_{1}} \rightarrow \Delta_{X_{1}}^{p})) \rightarrow G_{K} \rightarrow \mathrm{Aut}(\Z_{p}(1)).$$
Therefore, $\Pi_{Z}^{(p)}$ does not depend on $l$. Moreover, if we consider the \'etale covering space of $Z$ corresponding to $p^{n}\Z_{p}(1) \rtimes \Pi_{X_{1}}^{(p)} \subset \Z_{p}(1) \rtimes \Pi_{X_{1}}^{(p)}$, its geometrically pro-$p$ \'etale fundamental group is isomorphic to $\Z_{p}(1) \rtimes \Pi_{X_{1}}^{(p)}$ over $G_{K}$.
However, the Euler characteristic of the \'etale covering space is larger than that of $Z$ and therefore it is not isomorphic to $Z$.

Note that the order of the group $\mathrm{Aut}(\Pi_{Z}^{(p)})/\mathrm{Inn}(\Delta_{Z}^{p})$ is infinite since it contains $\mathbb{Z}_{p}^{\times}$.
Also, note that the group $\Delta_{Z}^{p}$ is not center-free.
\end{exam}

\label{example}

(Ippei Nagamachi) Research Institute for Mathematical Sciences, Kyoto University

E-mail address: nagachi@kurims.kyoto-u.ac.jp
\end{document}